\date{\today}
\newcommand{\D}{{\mathbb{D}}}
\newcommand{\N}{{\mathbb{N}}}
\newcommand{\R}{{\mathbb{R}}}
\renewcommand{\C}{{\mathbb{C}}}
\newcommand{\T}{{\mathbb{T}}}
\newcommand{\bbH}{{\mathbb{H}}}
\newcommand{\cF}{{\mathcal{F}}}
\newcommand{\cI}{{\mathcal{I}}}
\newcommand{\cP}{{\mathcal{P}}}
\newcommand{\zu}{z_{u_0}}
\newcommand{\zub}{\overline z_{u_0}}
\newcommand{\zi}{z_{\infty}}
\newcommand{\tzi}{\tilde z_{\infty}}
\renewcommand{\Re}{\operatorname{Re}}
\renewcommand{\Cap}{\mathrm{Cap}}
\renewcommand{\d}{\mathrm{d}}
\newcommand{\dx}{\mathrm{d}x}
\allowdisplaybreaks \numberwithin{equation}{section}
\newtheorem{theorem}{Theorem}[section]
\newtheorem{lemma}[theorem]{Lemma}
\newtheorem{proposition}[theorem]{Proposition}
\newtheorem{corollary}[theorem]{Corollary}
\theoremstyle{definition}
\newtheorem*{remark}{Remark}
\title{Szeg\H o-Widom asymptotics of Chebyshev Polynomials on Circular Arcs}
\date{\today}
\author{Benjamin Eichinger}
\begin{document}
	\maketitle
	\begin{abstract}
		%Let $A_\alpha=\{z\in\mathbb{C}:\ |z|=1,\ -\alpha\leq \arg z\leq \alpha\}$. We consider the Chebychev polynomials, $T_n$, of this set, i.e., the monic polynomials of degree at most $n$ that minimizes the sup-norm 
%$
%	\|T_n\|_{A_\alpha}.
%$
%We give explicit expressions for 
%$$
%\lim_n b^n(u,\infty)\Cap(A_\alpha)^{-n}T_n(u),
%$$
%where $b(u,\infty)$ denotes the complex Green's function of the domain $\overline{\mathbb C}\setminus A_\alpha$ and $\Cap(A_\alpha)$ its logarithmic capacity. That is, we prove that $A_\alpha$ has Szeg\H o-Widom asymptotics. Introducing the upper envelope of polynomials
%$$
%L_n(u)=\sup\{|P_n(u)|:\ \deg P_n\leq n, \|P_n\|_{A_\alpha}\leq 1\},
%$$ 
%we show that its limit $L(u)=\lim_nb^n(u,\infty)L_n(u)$ is a diagonal of a reproducing kernel. 
Thiran and Detaille give an explicit formula for the asymptotics of the sup-norm of the Chebyshev polynomials on a circular arc. We give the so-called Szeg\H o-Widom asymptotics for this domain, i.e.,  explicit expressions for the asymptotics of the corresponding extremal polynomials. Moreover, we solve a similar problem with respect to the upper envelope of a family of polynomials uniformly bounded on this arc. That is, we give explicit formulas for the asymptotics of the error of approximation as well as of the extremal functions. Our computations show that in the proper normalization the limit of the upper envelope represents the diagonal of a reproducing kernel of a certain Hilbert space of analytic functions. Due to Garabedian the analytic capacity in an arbitrary domain is the diagonal of the corresponding Szeg\H o kernel. We don't know any result of this kind with respect to upper envelopes of polynomials. If this is a general fact or a specific property of the given domain, we rise as an open question.
	\end{abstract}
	\section{Introduction}
Getting explicit asymptotics is a fundamental problem in constructive approximation theory. The problem can be related to both, approximation error and the function of best approximation. Usually, the second problem is essentially harder. For instance, it took almost 100 years that Lubinsky \cite{Lub07} characterized the extremal function in the famous Bernstein problem on the best approximation in $[-1,1]$ of $|x|^\alpha$ by polynomials. A problem, which is closer related to the one that we are going to consider in this paper was recently solved by Christiansen, Simon and Zinchenko \cite{ChrSiZi15} (to appear in   Invent. math.). They showed that if $E\subset\R$ is regular, compact and satisfies the Parreau-Widom condition, then the Chebyshev polynomials obey $\|T_n\|_E\leq Q\Cap(E)^n$, where $\Cap(E)$ is the logarithmic capacity of $E$ and $\|\cdot\|_E$ denotes the sup-norm. In this case $\overline{\C}\setminus E$ may be infinitely connected. Under the restriction that $E$ is a finite union of intervals, they were also able to describe the asymptotics of the extremal functions $T_n$.

For further references on Chebyshev polynomials and its asymptotics see \cite{SodYud92,Szeg24,Tot09,Tot14,Wid69}.
We consider Chebyshev polynomials on circular sets $A_\alpha$, of the form 
\begin{align*}
A_{\alpha}=\{u\in\C:\ |u|=1,\ -\alpha\leq\arg u\leq \alpha\},\quad 0<\alpha< \pi.
\end{align*}
It is motivated by a paper of Thiran and Detaille \cite{ThirDet91}, who showed that the extremal value obeys
\begin{align}\label{eq:ThirDetasymptotics}
	\|T_n\|_{A_\alpha}\sim\cot(\alpha/4)\Cap(A_\alpha)^{n+1}.
\end{align}
Our approach is completely different and allows us to find:
\begin{itemize}
	\item[(i)] explicit asymptotics of the Chebyshev polynomials,
	\item[(ii)] explicit asymptotics of the upper envelope of the family $\cP_{n,\alpha}$ of polynomials of degree at most $n$ which are bounded by one in modulus on $A_\alpha$; cf. \eqref{def:Pnalpha}.
\end{itemize}
%Our approach, which is completely different to their methods, allows us to  describe the asymptotics of the Chebyshev polynomials as well. Clearly, this also proves the asymptotics of the extremal value. 
%
%Instead of considering the Chebyshev polynomial, we try to find the polynomial 
%$
%	P_{n,\infty}=\frac{T_n}{\|T_n\|_{A_\alpha}},
%$
%i.e., the degree $n$ polynomial that is bounded by one in modulus on $A_\alpha$ and that has maximal leading coefficient. 
To be more precise, let $g_\Omega(z,z_0)$ denote the Green's function of the point $z_0$ and the domain $\Omega$. Writing  $i*g_\Omega(z,z_0)$ for the harmonic conjugate of $g_\Omega(z,z_0)$ we define the complex Green's function of the domain by
\begin{align*}
	b_\Omega(z,z_0)=e^{-(g_\Omega(z,z_0)+i*g_\Omega(z,z_0))};
\end{align*}
cf. \cite{Wid69}.
Instead of the Chebyshev polynomials, let us consider the normalized polynomials
$
	P_{n,\infty},
$
i.e., the polynomial in $\cP_{n,\alpha}$ that has maximal leading coefficient. Set $\Omega_\alpha=\overline{\C}\setminus A_\alpha$. Due to Montel's theorem, at least by passing to subsequences, the family $b_{\Omega_\alpha}(u,\infty)^nP_{n,\infty}(u)$ has a limit as $n\to\infty$. In Theorem \ref{thm:main} we present the limit function explicitly. Following the notion introduced in  \cite{ChrSiZi15},  we say that $\Omega_\alpha$ has Szeg\H o-Widom asymptotics. 

The leading coefficient reflects the behavior of the polynomial at $\infty$. It is therefore natural to consider this problem also for other points $u_0\in\Omega_{\alpha}$. By $P_{n,u_0}\in\cP_{n,\alpha}$ we denote those polynomials in $\cP_{n,\alpha}$ which have maximal value at the point $u_0$. Due to the symmetry of the domain, it suffices to consider the problem for $|u_0|<1$; see \eqref{eq:symmetryrelation}. 

Let $\lambda:\Omega_\alpha\to \Pi=\{\lambda\in\C:\  -\frac{\pi}{4}\leq \arg\lambda \leq \frac{\pi}{4}\}$  be defined by 
\begin{align}\label{def:lambda}
	\lambda(u)=\Big(\frac{ue^{i\alpha}-1}{u-e^{i\alpha}}\Big)^{1/4}.
\end{align}
%be the conformal map such that 
%$\lambda(e^{i\alpha})=\infty, \lambda(e^{-i\alpha})=0$ and $\lambda(1^-)=e^{-i\pi/2}.$
\begin{theorem}\label{thm:szegoWidomDisc}
	Let $|u_0|<1$ and $\lambda_0=\lambda(u_0)$. There exists $\phi\in\R$ such that 
	\begin{align}\label{eq:formulaWidomSzegoU0}
	\lim\limits_{n\to\infty}b_{\Omega_\alpha}(u,\infty)^nP_{n,u_0}(u)=e^{i\phi}\frac{1}{2}\left(1+\frac{h(\lambda,\lambda_0)}{h(\lambda_0,\lambda_0)}\right)\frac{\lambda^2-|\lambda_0|^2}{\lambda^2+|\lambda_0|^2}\frac{\lambda^2+\lambda_0^2}{\lambda^2+\overline{\lambda_0}^2}
	\end{align} 
	uniformly on compact subsets of $\Omega_\alpha$, where
	\begin{align*}
		h(\lambda,\lambda_0)=\frac{\lambda^2}{(\lambda^2-|\lambda_0|^2)(\lambda^2+|\lambda_0|^2)}.
	\end{align*}
\end{theorem}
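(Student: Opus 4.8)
The plan is to pull everything back, via the map $\lambda$ of \eqref{def:lambda}, to an explicitly solvable extremal problem on a half-plane, and then to invoke the reproducing-kernel description of such problems in the genus-zero situation.

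First I would make the conformal picture fully explicit. Writing $T(u)=\frac{ue^{i\alpha}-1}{u-e^{i\alpha}}$ (an involution), $T$ carries $\Omega_\alpha$ onto $\overline{\C}\setminus(-\infty,0]$, sending $A_\alpha$ to the slit $(-\infty,0]$, $\infty$ to $e^{i\alpha}$ and $\{|u|<1\}$ to a half-plane; composing with $w\mapsto w^{1/2}$ gives $\mu:=\lambda(u)^2$, a conformal bijection of $\Omega_\alpha$ onto $\bbH:=\{\Re\mu>0\}$ with $A_\alpha\mapsto i\R$, $\infty\mapsto\mu_\infty:=\lambda(\infty)^2=e^{i\alpha/2}$ and $u_0\mapsto\mu_0:=\lambda_0^2$. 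Under $u\mapsto\mu$ a polynomial $P\in\cP_{n,\alpha}$ becomes the \emph{even} rational function $\breve P(\mu)=P(T(\mu^2))$ whose only poles are at $\pm\mu_\infty$, of order $\le n$, with $|\breve P|\le1$ on $i\R$; conversely every such $\breve P$ arises this way, since $T(u)-e^{i\alpha}=\frac{e^{2i\alpha}-1}{u-e^{i\alpha}}$ makes $P(T(\mu^2))$ a genuine polynomial of degree $\le n$ in $u$. Finally, conformal invariance of the Green's function shows that $b_{\Omega_\alpha}(u,\infty)$ equals, up to a fixed unimodular constant, the Blaschke factor $\frac{\mu-\mu_\infty}{\mu+\overline{\mu_\infty}}=\frac{\lambda^2-\lambda_\infty^2}{\lambda^2+\overline{\lambda_\infty}^2}$, i.e.\ the Riemann map $\bbH\to\D$ taking $\mu_\infty$ to $0$. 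Hence $b_{\Omega_\alpha}(u,\infty)^nP_{n,u_0}(u)$ becomes $(\text{const})^n$ times $\big(\tfrac{\mu-\mu_\infty}{\mu+\overline{\mu_\infty}}\big)^n\breve P_n(\mu)$, which lies in the closed unit ball of $H^\infty(\bbH)$, and maximizing $|P(u_0)|$ amounts to maximizing $|\breve P(\mu_0)|$ over the above class.

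Next I would pass to the limit. Set $\Phi_n:=\big(\tfrac{\mu-\mu_\infty}{\mu+\overline{\mu_\infty}}\big)^n\breve P_n$. The evenness of $\breve P_n$ couples its values on $\bbH$ with those on $-\bbH$ — this is exactly the genus-zero Schottky-double structure of $\Omega_\alpha$ (its double being $\overline{\C}=\bbH\cup i\R\cup(-\bbH)$ with the reflection $\mu\mapsto-\overline\mu$) — and forces a relation between $\Phi_n$ and its reflection that carries the $n$-th power of a fixed inner function. By Montel's theorem a subsequence of $\Phi_n$ converges uniformly on compacta to some $\Phi$ in the unit ball of $H^\infty(\bbH)$, and from the extremality of $\breve P_n$ one shows that $\Phi$ solves the limiting problem ``maximize $|f(\mu_0)|$'', where $f$ runs over the unit ball of a reproducing-kernel Hilbert space $\cH$ of analytic functions attached to the double and compatible with the reflection. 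As always, the solution of such a problem is the normalized reproducing kernel of $\cH$ times a unimodular inner Blaschke factor carrying the zeros forced by the symmetry and by the former pole at $\mu_\infty$; the prefactor $\tfrac12(1+\,\cdot\,)$ in \eqref{eq:formulaWidomSzegoU0} reflects the two sheets of the double, the renormalized extremal polynomial being the symmetrization of a pair of half-plane functions. It then remains to make $\cH$, its kernel and the inner factor explicit: since the double is a sphere the kernel of $\cH$, computed with respect to arc length on $i\R$, is the rational function obtained by combining the Szeg\H o kernel of $\bbH$ at the real point $|\mu_0|$ with that of $-\bbH$ at $-|\mu_0|$ (the geometric mean of $\mu_0$ and $\overline{\mu_0}$ and its mirror image), and transporting to the variable $\lambda$ via $\mu=\lambda^2$ this is precisely $h(\lambda,\lambda_0)$, with $h(\lambda,\lambda_0)/h(\lambda_0,\lambda_0)$ its normalization. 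A direct computation identifies the remaining inner factor with the product of Blaschke factors in \eqref{eq:formulaWidomSzegoU0}; note that its zeros cancel the pole that $h(\cdot,\lambda_0)$ has inside $\Pi$, so the whole right-hand side is analytic on $\Pi$, as a normal limit of functions analytic on $\Omega_\alpha$ must be. Absorbing the $n$-dependent unimodular constant from $b_{\Omega_\alpha}(u,\infty)^n$ into $e^{i\phi}$ and reading off the convergence from the half-plane model away from $\mu_\infty$ yields \eqref{eq:formulaWidomSzegoU0}, uniformly on compacta of $\Omega_\alpha$.

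The main obstacle is the limiting step: because the identity tying $\Phi_n$ to its reflection carries the $n$-th power of an inner function, one cannot simply let $n\to\infty$ in it; one must instead extract, from the extremality of $\breve P_n$ together with this symmetry, the correct limiting variational problem and verify that no mass escapes — i.e.\ that the limit is genuinely governed by the reproducing kernel of $\cH$, not merely dominated by it. Once the limiting problem is pinned down, the identification of $\cH$, the computation of its (rational, because genus zero) kernel, and the check that the resulting expression lies in the unit ball of $H^\infty(\Omega_\alpha)$ and is extremal at $u_0$ are finite, if somewhat lengthy, computations.
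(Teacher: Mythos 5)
There is a genuine gap, and you have in fact named it yourself: the entire content of the theorem sits in the ``limiting step'' that your proposal defers. Your conformal transplantation to the half-plane $\{\Re\mu>0\}$ (with $\mu=\lambda^2$, even rational functions with poles at $\pm\mu_\infty$, and $b_{\Omega_\alpha}(\cdot,\infty)$ becoming the Blaschke factor at $\mu_\infty$) is a correct reformulation, but after Montel gives a subsequential limit $\Phi$ you assert, ``as always,'' that the limiting pointwise maximization is solved by the normalized reproducing kernel of some Hilbert space $\cH$ attached to the Schottky double, times explicit Blaschke factors. That is not an available general principle here: the constraint class is the sup-norm unit ball of degree-$n$ polynomials (an $H^\infty$-type, not $L^2$-type, problem), the admissible limits of $b^n\breve P_n$ are not identified, and you give no lower-bound construction showing the putative extremal limit is actually attained (that ``no mass escapes''). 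Indeed the paper itself treats the kernel interpretation of the limiting upper envelope as an \emph{observation a posteriori} and raises whether it is a general fact as an open question in the abstract; so taking that structure as the engine of the proof begs the question. Your identification of $h(\lambda,\lambda_0)$ with a combination of Szeg\H o kernels of the two half-planes is likewise a heuristic guess of the answer rather than a derivation.

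The paper avoids all of this by never working with an abstract limiting variational problem: it maps $\Omega_\alpha$ to $\Omega_0=(\C\setminus\R)\cup(-1,1)$, where the \emph{finite-$n$} extremal functions are known exactly from Yuditskii's solution of the weighted Chebyshev problem (Theorem \ref{thm:maincCP}); the formula \eqref{def:Qn} contains a shrinking extra interval $I_n=[-x_n,x_n]$ fixed by the harmonic-measure condition \eqref{eq:harmonicMeasureSum2}, and the asymptotics follow from $x_n\to0$, the vanishing of the second term (Lemma \ref{lem:ZeroPart}), and the convergence $n\omega_n(\dx)\to\delta_0$ in the Green's-function identity \eqref{eq:FormulaGreen}. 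General $u_0\in\D$ is then handled by an auxiliary M\"obius transformation sending $u_0\mapsto0$ and tracking images under the resulting FLT, and only at the very end does the change of variables $w=\sqrt{(z-1)/(z+1)}$, $\lambda=\sqrt{-iw}$ produce \eqref{eq:formulaWidomSzegoU0}; the factor $\tfrac12\bigl(1+h(\lambda,\lambda_0)/h(\lambda_0,\lambda_0)\bigr)$ is just $\tfrac{1+s}{2s}$ rewritten, not an input kernel. To repair your route you would either have to import such an explicit finite-$n$ solution anyway, or prove from scratch the duality/extremality statement characterizing the limit of $b^n P_{n,u_0}$ --- which is precisely the missing argument.
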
	
Let $L_n(u)$ denote the upper envelope of $\cP_{n,\alpha}$, i.e., 
\begin{align*}
	L_n(u):=\sup\{|P_n(u)|:\ P_n\in\cP_{n,\alpha}\}.
\end{align*}
%and 
%\begin{align*}
%	L(u):=\lim\limits_{n}e^{-ng(u,\infty;\Omega_\alpha)}L_n(u).
%\end{align*}
\begin{theorem}\label{thm:ReproducingKernel}
	%$L(u)$ is the diagonal of a reproducing kernel on $\Omega_\alpha$.
	Let $\lambda=\lambda(u)$ be defined as above, $\lambda_0=\lambda(u_0)$ and define
	the reproducing kernel $k_{\Omega_{\alpha}}(u,u_0)$ by
	\begin{align*}
	k_{\Omega_{\alpha}}(u,u_0)=k_{\bbH_+}(\lambda,\lambda_0):=\frac{2\lambda\overline{\lambda}_0}{(\lambda+\overline \lambda_0)^2}.
	\end{align*}
	Then 
	\begin{align}\label{eq:ExtremalValueIsReprod}
	L_n(u)\sim e^{ng_{\Omega_\alpha}(u,\infty)}k_{\Omega_{\alpha}}(u,u).
	\end{align}
\end{theorem}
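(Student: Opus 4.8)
The plan is to read the envelope off from the point--extremal polynomials and to evaluate the Szeg\H o--Widom asymptotics of Theorem~\ref{thm:szegoWidomDisc} on the diagonal. First I would record that for each fixed $u\in\Omega_\alpha$ the supremum defining $L_n(u)$ is attained: $\cP_{n,\alpha}$ is a compact subset of the finite--dimensional space of polynomials of degree at most $n$, so some $Q\in\cP_{n,\alpha}$ has $|Q(u)|=L_n(u)$, and such a $Q$ is, up to a unimodular factor, a polynomial of maximal value at $u$; hence
\[
	L_n(u)=|P_{n,u}(u)|,\qquad u\in\Omega_\alpha .
\]
By the symmetry relation \eqref{eq:symmetryrelation} it is enough to prove \eqref{eq:ExtremalValueIsReprod} for $|u|<1$ (the cases $|u|>1$ and $u\in\Omega_\alpha$ with $|u|=1$ then following from \eqref{eq:symmetryrelation} and continuity), and for $|u|<1$ the polynomials $P_{n,u}$ are exactly those governed by Theorem~\ref{thm:szegoWidomDisc}. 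It is also worth keeping in mind the conformal picture: $\lambda$ maps $\Omega_\alpha$ conformally onto the interior of $\Pi$, $\lambda\mapsto\lambda^2$ maps $\Pi$ onto $\bbH_+$, and in these coordinates $b_{\Omega_\alpha}(u,\infty)$ is, up to a unimodular constant, the Blaschke factor of $\bbH_+$ at $\lambda(\infty)^2$, while $k_{\Omega_\alpha}$ is the pull--back of $k_{\bbH_+}$.

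Next I would put $u=u_0$ into \eqref{eq:formulaWidomSzegoU0}. Fixing $u_0$ with $|u_0|<1$, the convergence in Theorem~\ref{thm:szegoWidomDisc} is uniform on compact subsets of $\Omega_\alpha$, so it holds at the base point $u=u_0$, i.e. with $\lambda=\lambda_0$. There the factor $\tfrac12\bigl(1+h(\lambda,\lambda_0)/h(\lambda_0,\lambda_0)\bigr)$ equals $1$: indeed $h(\lambda_0,\lambda_0)$ is finite and non--zero, since for $|u_0|<1$ the point $\lambda_0$ lies in the open sector and is not a positive real number, so $\lambda_0^4\ne|\lambda_0|^4$ and $\lambda_0\ne0$. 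Since moreover $|b_{\Omega_\alpha}(u_0,\infty)|=e^{-g_{\Omega_\alpha}(u_0,\infty)}$ and $P_{n,u_0}(u_0)=L_n(u_0)>0$, taking absolute values in \eqref{eq:formulaWidomSzegoU0} cancels the phase $e^{i\phi}$ together with the argument of $b_{\Omega_\alpha}(u_0,\infty)^n$, leaving a genuine limit, as $n\to\infty$,
\[
	e^{-ng_{\Omega_\alpha}(u_0,\infty)}\,L_n(u_0)\ \longrightarrow\ \Psi(\lambda_0),
\]
where $\Psi(\lambda_0)$ is the explicit function of $\lambda_0$ obtained from the right--hand side of \eqref{eq:formulaWidomSzegoU0} by setting $\lambda=\lambda_0$ and passing to the modulus.

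The final --- and principal --- step is to simplify $\Psi$ and recognise it as the diagonal of the reproducing kernel, i.e. to check
\[
	\Psi(\lambda_0)\;=\;\frac{2\lambda_0\overline{\lambda_0}}{(\lambda_0+\overline{\lambda_0})^{2}}\;=\;k_{\bbH_+}(\lambda_0,\lambda_0),
\]
which I would do by writing $\lambda_0=a+ib$ with $a=\Re\lambda_0>0$ and reducing both sides to elementary expressions in $a$ and $b$. Granting this identity, \eqref{eq:ExtremalValueIsReprod} follows for $|u|<1$ on multiplying through by $e^{ng_{\Omega_\alpha}(u,\infty)}=|b_{\Omega_\alpha}(u,\infty)|^{-n}$, and the remaining $u\in\Omega_\alpha$ are obtained by transporting the statement through \eqref{eq:symmetryrelation}, using that $g_{\Omega_\alpha}(\cdot,\infty)$ and $k_{\Omega_\alpha}$ are compatible with the symmetry of $A_\alpha$. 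I expect two points to need care. The first is uniformity: for \eqref{eq:ExtremalValueIsReprod} to hold as an honest asymptotic one needs the convergence in Theorem~\ref{thm:szegoWidomDisc} to be locally uniform in the parameter $u_0$ as well, so that $u\mapsto e^{-ng_{\Omega_\alpha}(u,\infty)}L_n(u)=\bigl|e^{-ng_{\Omega_\alpha}(u,\infty)}P_{n,u}(u)\bigr|$ converges locally uniformly. The second --- and this is the real content of the theorem --- is the algebraic simplification identifying $\Psi$ with the kernel diagonal, the concrete instance of the phenomenon singled out in the abstract that, in the correct normalization, the limit of the upper envelope is the diagonal of a reproducing kernel.
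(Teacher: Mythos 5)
Your route is the same as the paper's: use $L_n(u)=|P_{n,u}(u)|$, reduce to $|u_0|<1$ via \eqref{eq:symmetryrelation}, evaluate \eqref{eq:formulaWidomSzegoU0} at $u=u_0$ (where the prefactor $\tfrac12\bigl(1+h(\lambda_0,\lambda_0)/h(\lambda_0,\lambda_0)\bigr)=1$, and your justification that $h(\lambda_0,\lambda_0)$ is finite and nonzero for $|u_0|<1$ is correct), and then identify the resulting modulus with $k_{\bbH_+}(\lambda_0,\lambda_0)$. The difficulty is that you stop exactly where the paper's proof begins: the proof of Theorem \ref{thm:ReproducingKernel} in the paper consists entirely of the algebraic chain showing that this diagonal value equals $2|\lambda_0|^2/|\lambda_0+\overline{\lambda_0}|^2$, and you only announce that you ``would'' carry out this check by writing $\lambda_0=a+ib$. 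Since that identity is the whole content of the theorem (as you yourself note), deferring it leaves the proof empty.

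Moreover, the deferred step would not go through as you describe it. Taking \eqref{eq:formulaWidomSzegoU0} literally, with $\lambda_0=a+ib$, $a>|b|>0$, the first factor has modulus $\bigl|\tfrac{\lambda_0^2-|\lambda_0|^2}{\lambda_0^2+|\lambda_0|^2}\bigr|=\tfrac{|b|}{a}$, while the printed last factor gives $\bigl|\tfrac{2\lambda_0^2}{\lambda_0^2+\overline{\lambda_0}^2}\bigr|=\tfrac{a^2+b^2}{|a^2-b^2|}$, so $\Psi(\lambda_0)=\tfrac{|b|(a^2+b^2)}{a\,|a^2-b^2|}$, which is \emph{not} $k_{\bbH_+}(\lambda_0,\lambda_0)=\tfrac{a^2+b^2}{2a^2}$ (try $a=1$, $b=1/2$). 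The identity holds only after one notices that the last factor of \eqref{eq:formulaWidomSzegoU0} should read $\tfrac{\lambda^2+\lambda_0^2}{\lambda^2-\overline{\lambda_0}^2}$: this is what the substitution $w=i\lambda^2$ actually produces from $\tfrac{w+w_0}{w+\overline{w}_0}$ in the proof of Theorem \ref{thm:szegoWidomDisc}, since $\overline{w}_0=-i\overline{\lambda_0}^2$, and it is what the paper's own computation tacitly uses via the denominator $|\lambda_0^2-\overline{\lambda_0}^2|=4a|b|$, which then yields $\tfrac{|b|}{a}\cdot\tfrac{a^2+b^2}{2a|b|}=\tfrac{a^2+b^2}{2a^2}$ as required. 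So your verification step has to detect and repair this sign before it closes; as written, the proposed check of ``both sides in $a$ and $b$'' would simply fail. The remaining points you raise (attainment of the supremum, the reduction by symmetry, continuity of $L$, and the pointwise reading of the asymptotics) are consistent with how the paper handles them and are not where the substance lies.
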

Finally, we would like to mention that the proofs will show that these results are universal in the following sense. In fact, one could instead of polynomials consider rational functions with fixed collection of poles $C=\{c_1,\dots, c_g\}$ outside of $\D$. The solution for the same problem for the class $\cF_{n,\alpha}$ of rational functions with its only poles in $C$ of order at most $n$ is denoted by $F_{n,u_0}$. Let $B(u)=\prod b_{\Omega_\alpha}(u,c_k)$. The limit 
\begin{align*}
	\lim\limits_{n\to\infty}B(u)^nF_{n,u_0}(u)=e^{i\phi}\frac{1}{2}\left(1+\frac{h(\lambda,\lambda_0)}{h(\lambda_0,\lambda_0)}\right)\frac{\lambda^2-|\lambda_0|^2}{\lambda^2+|\lambda_0|^2}\frac{\lambda^2+\lambda_0^2}{\lambda^2+\overline{\lambda_0}^2},
\end{align*}
for every choice of $C$. In particular, the upper envelope of this family, denoted by $M_n$, satisfies
\begin{align*}
	M_n(u)\sim e^{n\sum g_{\Omega_\alpha}(u,c_k)}k_{\Omega_{\alpha}}(u,u).
\end{align*}
	\section{Szeg\H o -Widom asymptotics}
In order to have uniqueness, we fix the normalization for the extremal polynomials and the complex Green's function
\begin{align*}
\lim_{u\to\infty}ub_{\Omega_\alpha}(u,\infty)>0\quad\text{ and }\quad b_{\Omega_\alpha}(u_0,\infty)^nP_{n,u_0}(u_0)>0.
\end{align*}
Let $\cP_n$ be the set of all polynomials of degree at most $n$ and 
\begin{align}\label{def:Pnalpha}
\cP_{n,\alpha}=\{P\in\cP_n: \|P\|_{A_\alpha}\leq 1\}.
\end{align}
Since $A_\alpha\subset\T$, the map 
\begin{align}\label{def:starmap}
P_n(u)\mapsto P_n^*(u):=u^n\overline{P_n\left (1/\overline{u}\right )},
\end{align}
is an involution on $\cP_{n,\alpha}$. This shows that for $L_n(\infty):=1/\|T_n\|_{A_\alpha}$ we have
\begin{align}\label{eq:leastDevEvalAtOrigin}
L_n(\infty)=L_n(0)
\end{align}
and there exists $\phi\in\R$ such that $P_{n,\infty}=e^{i\phi}P^*_{n,0}$. We will give a solution of \eqref{eq:leastDevEvalAtOrigin} by reducing it to a problem which was already considered by Yuditskii \cite{Yud99}. Let $A_0=\R\setminus (-1,1)$ and $\Omega_0=(\C\setminus\R)\cup (-1,1)$. The map
\begin{align*}
u(z)=\frac{z-z_0}{z- \overline z_0},\qquad z_0=i\tan(\alpha/2),
\end{align*}
maps $\Omega_0$ conformally onto $\Omega_\alpha$. By $z(u)$ we denote its inverse map. Henceforth, if we use $z$ and $u$ simultaneously we have in mind $z(u)$ and $u(z)$, respectively.  Defining $z_\infty=z(\infty)$ it is obvious that  $\zi=\overline z_0$. Note that $z(e^{i\alpha})=-1$ and $z(e^{-i\alpha})=1$. To the polynomial $E_n(z)=(z-z_\infty)^n$, we associate the weighted norm 
\begin{align}\label{def:piNorm}
	\|Q_n\|_{\Pi(E_n)}:=\sup_{x\in A_0}\left|\frac{Q_n(x)}{E_n(x)}\right|\quad\text{for }Q_n\in\cP_n.
\end{align}
\begin{lemma}
Let $Q_{n,z_0}\in\cP_n$ be the solution of the extremal problem
\begin{align*}
	|Q_{n,z_0}(z_0)|=\sup\{|Q_n(z_0)|:\ Q_n\in\cP_n, \|Q_n\|_{\Pi(E_n)}\leq 1\}.
\end{align*}
Then there exists $\phi\in\R$ such that
\begin{align}\label{eq:QPRelation}
	P_{n,0}(u)=e^{i\phi}\frac{Q_{n,z_0}(z)}{E_n(z)}.
\end{align}
\end{lemma}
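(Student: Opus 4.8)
The plan is to set up the correspondence between the two extremal problems via the conformal map $u(z)=\frac{z-z_0}{z-\overline z_0}$ and the substitution $Q_n(z)=E_n(z)P_n(u(z))$, and to show this substitution carries one constraint class onto the other while matching the point evaluations. First I would observe that $u(z)$ maps $\Omega_0$ onto $\Omega_\alpha$ and in particular sends $z_0=i\tan(\alpha/2)$ to $u=0$, and $\overline z_0=z_\infty$ to $u=\infty$. The boundary $A_0=\R\setminus(-1,1)$ is mapped onto $A_\alpha$ (using $z(e^{i\alpha})=-1$, $z(e^{-i\alpha})=1$ and that the outer part of the real line goes to the arc), so a point $x\in A_0$ corresponds to a point $u\in A_\alpha$.

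Next I would verify that the substitution is a bijection between $\cP_{n,\alpha}$-type data and $\{Q_n\in\cP_n:\|Q_n\|_{\Pi(E_n)}\le 1\}$. Given $P_n\in\cP_n$, the function $Q_n(z):=E_n(z)P_n(u(z))=(z-z_\infty)^n P_n\!\left(\frac{z-z_0}{z-\overline z_0}\right)$ is a priori rational with a possible pole of order $n$ at $z=\overline z_0=z_\infty$; but since $P_n$ has degree at most $n$, the factor $(z-z_\infty)^n$ exactly cancels the pole, so $Q_n\in\cP_n$. Conversely, any $Q_n\in\cP_n$ yields $P_n(u):=Q_n(z(u))/E_n(z(u))$, and one checks using $z(u)=\frac{z_0-u\,\overline z_0}{1-u}$ (or whatever the explicit inverse is) together with $z(u)-z_\infty=\frac{(z_0-\overline z_0)}{1-u}$ that $P_n$ is a polynomial of degree at most $n$ in $u$. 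The norm constraint matches term by term: for $x\in A_0$ with corresponding $u\in A_\alpha$, $\left|\frac{Q_n(x)}{E_n(x)}\right|=|P_n(u)|$, so $\|Q_n\|_{\Pi(E_n)}\le 1$ if and only if $\|P_n\|_{A_\alpha}\le 1$. Finally, at $z=z_0$ we have $u(z_0)=0$, hence $|Q_n(z_0)|/|E_n(z_0)|=|P_n(0)|$, so maximizing $|Q_n(z_0)|$ subject to $\|Q_n\|_{\Pi(E_n)}\le 1$ is, up to the constant $|E_n(z_0)|$ which is independent of $P_n$, the same as maximizing $|P_n(0)|$ subject to $\|P_n\|_{A_\alpha}\le 1$. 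This identifies $P_{n,0}$ with $Q_{n,z_0}/E_n$ up to a unimodular constant, giving \eqref{eq:QPRelation}; the phase $e^{i\phi}$ absorbs the argument of $E_n(z_0)$ and any choice of normalization.

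The main point requiring care is the degree bookkeeping in both directions — i.e., confirming that no spurious pole survives and that the inverse substitution genuinely lands in $\cP_n$ rather than in a larger space of rational functions. Concretely one must check that the leading behavior of $Q_n$ as $z\to\infty$ is polynomial of degree $\le n$ (which follows since $P_n(u(\infty))=P_n(\infty\text{ in }u)$ is finite because $u(\infty)=1\in\Omega_\alpha$, wait — rather $u(z)\to 1$ as $z\to\infty$, and $P_n(1)$ is just a value), and symmetrically that $Q_n(z(u))/E_n(z(u))$ has no pole at the preimage of $\infty$. Once the dictionary is established, uniqueness of the extremal polynomial in each problem (a standard consequence of the strict convexity / the fact that $\cP_{n,\alpha}$ is a compact convex set and the functional $P\mapsto |P(0)|$ is maximized at an extreme point) transfers across, and since both extremal problems have unique solutions normalized to be positive at the evaluation point, the relation \eqref{eq:QPRelation} holds with a single real $\phi$.

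I expect the genuinely delicate step to be not the algebra but making the boundary correspondence $A_0\leftrightarrow A_\alpha$ precise, including orientation and the behavior at the endpoints $\pm 1$ and at $z=\infty$ (which maps into the interior of $\Omega_\alpha$, not onto $A_\alpha$), so that the sup over $A_0$ in \eqref{def:piNorm} really does equal the sup over all of $A_\alpha$ and nothing is lost. Everything else is a direct computation with Möbius transformations.
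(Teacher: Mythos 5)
Your proof is correct and takes essentially the same route as the paper's: the substitution $Q_n(z)=E_n(z)P_n(u(z))$ is shown to be a degree-preserving bijection of $\cP_n$ (the paper verifies this via the identity $C\frac{z-z_l}{z-z_\infty}=u(z)-u(z_l)$, you via direct pole/degree bookkeeping, which is equivalent), followed by the norm identity $\|P_n\|_{A_\alpha}=\|Q_n\|_{\Pi(E_n)}$ and the matching of the point evaluations at $0$ and $z_0$. The only slip is your closing remark that $z=\infty$ maps into the interior of $\Omega_\alpha$ --- in fact $u(z)\to 1\in A_\alpha$ as $z\to\infty$, and the supremum over $A_0$ still recovers the supremum over all of $A_\alpha$ by continuity, so the argument is unaffected.
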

\begin{proof}
	We have
	\begin{align*}
		C\frac{z-z_l}{z-z_\infty}=u(z)-u(z_l),\quad C=\frac{z_0-\zi}{z_l-\zi}.
	\end{align*}
	Hence, the map
	\begin{align*}
		P_n(u)\mapsto Q_n(z):=E_n(z)P_n(u(z)),
	\end{align*}
	maps $\cP_n$ bijectively onto itself. Moreover,
	\begin{align*}
		\|P_n\|_{A_\alpha}=\sup_{x\in A_0}\left|\frac{Q_n(x)}{E_n(x)}\right|=\|Q_n\|_{\Pi(E_n)}.
	\end{align*}
	Therefore,
	\begin{align*}
		|P_{n,0}(0)|&=\sup\{|P_n(0)|:\ P_n\in\cP_{n,\alpha}\}\\
		&=\sup\left \{\left|\frac{Q_n(z_0)}{E_n(z_0)}\right|:\ Q_n\in\cP_n,\|Q_n\|_{\Pi(E_n)}\leq 1\right\}
		=\left|\frac{Q_{n,z_0}(z_0)}{E_n(z_0)}\right|
	\end{align*}
	and \eqref{eq:QPRelation} holds.
\end{proof}
In \cite{Yud99} an explicit solution for this kind of problem is given. First, let us mention that in \cite{ThirDet91} it is shown that for fixed $\alpha$ there may be $N\in\N$ such that for $n<N$ the extremal polynomial is just $z^n$. This corresponds to a special case in \cite{Yud99}. Since we are only interested in asymptotics we assume that $n>N$. We recall the theorem in a way that is convenient for our purpose. Let $\omega(z,I;\Omega)$ denote the harmonic measure of the domain $\Omega$.
\begin{theorem}[\cite{Yud99}]\label{thm:maincCP}
	Let $E_n(z)$ be a polynomial with zeros $Z=\{\overline z_1,\dots,\overline z_n\}\subset\C_-$ and $z_0=ir\in i\R_{>0}$. Then there exists a unique $0<x_n<1$ such that $I_n=[-x_n,x_n]$ satisfies
	\begin{align}\label{eq:harmonicMeasureSum}
		\sum_{\overline z_l\in Z\cup\{\overline z_0\}}\omega(\overline{z}_l,I_n;\Omega_0\setminus I_n)=1.
	\end{align}
	Let $\Omega_n=\Omega_0\setminus I_n$ and set
	\begin{align*}
		\cI(z)=\prod_{\overline z_l\in Z\cup \{\overline z_0\}}b_{\Omega_n}(z,\overline z_l)
	\end{align*}
	and
	\begin{align*}
		s_n(z)=\sqrt{\frac{z_0^2-1}{z_0^2-x_n^2}\frac{z^2-x_n^2}{z^2-1}},\quad s_n(z_0)=1.
	\end{align*}
	The extremal polynomial $Q_{n,z_0}$ is up to the unimodular factor $e^{i\phi}$ uniquely given by
	\begin{align*}
	Q_{n,z_0}(z)=e^{i\phi}E_n(z)\left(\frac{1+s_n(z)}{2s_n(z)}\frac{1}{\cI(z)}+\frac{1-s_n(z)}{2s_n(z)}\frac{z-z_0}{z-\overline z_0}\frac{\overline{E_n(\overline{z})}}{E_n(z)}\cI(z)\right).
	\end{align*}
	In particular,
	\begin{align*}
	L_n(z_0)=|E_n(z_0)|\exp\bigg(\sum_{z\in Z\cup \{\overline z_0\}}g_{\Omega_n}(\overline{z}_l,z_0)\bigg).
	\end{align*}
\end{theorem}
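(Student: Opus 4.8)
The proof of Theorem~\ref{thm:maincCP} runs in three stages: fix the parameter $x_n$, write down and verify a candidate extremal polynomial, and certify its optimality by a dual interpolation argument. For $x_n$, set $\Sigma(x):=\sum_{\overline z_l\in Z\cup\{\overline z_0\}}\omega(\overline z_l,[-x,x];\Omega_0\setminus[-x,x])$. By monotonicity of harmonic measure $\Sigma$ is continuous and strictly increasing on $(0,1)$, with $\Sigma(0^+)=0$ and $\Sigma(1^-)=\sum_{\overline z_l}\omega(\overline z_l,(-1,1);\C_-)$, since as $x\to1^-$ the domain $\Omega_0\setminus[-x,x]$ degenerates, the component containing the $\overline z_l$ becoming $\C_-$ with $(-1,1)$ on its boundary. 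The hypothesis $n>N$ is precisely that this limiting sum exceeds $1$; then there is a unique $x_n\in(0,1)$ with $\Sigma(x_n)=1$, whereas for $n\le N$ no such $x_n$ exists and the extremal degenerates to $z^n$. The point of this normalization is single valuedness: $\Omega_n=\Omega_0\setminus I_n$ is doubly connected, so $\cI(z)=\prod b_{\Omega_n}(z,\overline z_l)$ is a priori defined only up to the multiplicative period $e^{2\pi i\Sigma(x_n)}$ it picks up around the loop separating $A_0$ from $I_n$ (the period of the harmonic conjugates $*g_{\Omega_n}(\cdot,\overline z_l)$), and $\Sigma(x_n)=1$ makes $\cI$ single valued on $\Omega_n$.

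Next I would check admissibility of the candidate. The function $s_n$ is single valued on $\Omega_n$ because $s_n^2$ is rational with its four branch points $\pm x_n,\pm1$ all lying on $\partial\Omega_n$, two on each boundary component, so $s_n$ has trivial monodromy around the one nontrivial loop of $\Omega_n$. Put $f:=\frac{1+s_n}{2s_n}\frac1\cI+\frac{1-s_n}{2s_n}\frac{z-z_0}{z-\overline z_0}\frac{\overline{E_n(\overline z)}}{E_n(z)}\cI$, so that $Q_{n,z_0}=e^{i\phi}E_nf$. The only points of $\C$ at which $E_nf$ could fail to be analytic are $\pm x_n$ (zeros of $s_n$) and $\overline z_0$ (a simple zero of $\cI$ at which $E_n$ is nonvanishing); a local computation at each of them — using $s_n(\pm x_n)=0$, the branch value of $s_n$ at $\overline z_0$, and the simple zeros of $\cI$ — shows all singularities cancel (in particular $\pm x_n$ lie in the set $\fe$ below), so $Q_{n,z_0}\in\cP_n$, and the growth at $\infty$ is $O(z^n)$ since $\cI,s_n$ and $\overline{E_n(\overline z)}/E_n$ are finite and nonzero there. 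For the weighted norm, on $A_0$ one has $|1/\cI|=1$ and $\bigl|\frac{z-z_0}{z-\overline z_0}\frac{\overline{E_n(\overline z)}}{E_n(z)}\bigr|=1$, while $s_n$ is real with $|s_n|\ge\bigl((r^2+1)/(r^2+x_n^2)\bigr)^{1/2}>1$ (this is where $x_n<1$ enters); since $\bigl(|1+t|+|1-t|\bigr)/(2|t|)=1$ for real $t$ with $|t|\ge1$, the triangle inequality gives $|f|\le1$ on $A_0$, i.e.\ $\|Q_{n,z_0}\|_{\Pi(E_n)}\le1$. Finally, at $z=z_0$ the relation $s_n(z_0)=1$ annihilates the second summand, so $f(z_0)=\cI(z_0)^{-1}=\prod_l e^{g_{\Omega_n}(z_0,\overline z_l)+i*g_{\Omega_n}(z_0,\overline z_l)}$ and hence $|Q_{n,z_0}(z_0)|=|E_n(z_0)|\exp\bigl(\sum_l g_{\Omega_n}(z_0,\overline z_l)\bigr)$, the claimed value of $L_n(z_0)$.

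It remains to prove optimality: $|Q_n(z_0)|\le|Q_{n,z_0}(z_0)|$ for every $Q_n\in\cP_n$ with $\|Q_n\|_{\Pi(E_n)}\le1$. I would produce a dual certificate supported on the contact set. From the norm computation (and $s_n^2\ne1$ on $A_0$) one sees that $|f|=1$ on $A_0$ exactly on the finite set $\fe=\{x\in A_0:\ \cI(x)^{-2}=-\frac{x-z_0}{x-\overline z_0}\frac{\overline{E_n(\overline x)}}{E_n(x)}\}$, and an argument principle count — using $\Sigma(x_n)=1$ in the form $\frac1{2\pi}|\Delta_{A_0}\arg\cI|=n$ together with the behaviour of $f$ at $\pm1$ and $\infty$ — gives $\#\fe=n+1$ (the expected number of Chebyshev contact points for the $(n+1)$-dimensional space $\cP_n$). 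Write $\fe=\{x_0,\dots,x_n\}$ and $c_j=\frac{E_n(x_j)}{E_n(z_0)}\prod_{k\ne j}\frac{z_0-x_k}{x_j-x_k}$. Lagrange interpolation of $Q_n$ at the nodes $x_j$ yields $\frac{Q_n(z_0)}{E_n(z_0)}=\sum_j c_j\frac{Q_n(x_j)}{E_n(x_j)}$ for all $Q_n\in\cP_n$, whence $|Q_n(z_0)|\le|E_n(z_0)|\bigl(\sum_j|c_j|\bigr)\|Q_n\|_{\Pi(E_n)}$. One then verifies the two identities $\sum_j|c_j|=\exp\bigl(\sum_l g_{\Omega_n}(z_0,\overline z_l)\bigr)$ and $\sum_j c_j\,\overline{f(x_j)}=\sum_j|c_j|$ — the latter saying the $c_j$ and the boundary values of $f$ at the nodes are phase-aligned — which force this bound to be attained precisely by $f$, giving optimality and uniqueness up to $e^{i\phi}$. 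I expect the exact count $\#\fe=n+1$ and this phase bookkeeping to be the real obstacle; everything else is local complex analysis. Combining with the preceding Lemma gives $P_{n,0}(u)=e^{i\phi}Q_{n,z_0}(z)/E_n(z)$ and the stated formula $L_n(z_0)=|E_n(z_0)|\exp\bigl(\sum_l g_{\Omega_n}(\overline z_l,z_0)\bigr)$.
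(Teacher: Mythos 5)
This statement is not proved in the paper at all: it is quoted from Yuditskii \cite{Yud99} ("In \cite{Yud99} an explicit solution for this kind of problem is given\dots We recall the theorem"), so there is no in-paper argument to compare against. Judged on its own, your outline has the right architecture — fix $x_n$ so that $\cI$ is single-valued, verify the two-term ansatz is admissible with the correct value at $z_0$, then certify optimality by a dual functional supported on the contact set — and several pieces are genuinely right: the identification of \eqref{eq:harmonicMeasureSum} as the condition killing the multiplicative period $e^{2\pi i\Sigma(x_n)}$ of $\cI$ around the cycle separating $I_n$ from $A_0$, the single-valuedness of $s_n$, the norm bound $|f|\le 1$ on $A_0$ via $|s_n|>1$ and $\bigl(|1+s_n|+|1-s_n|\bigr)/(2|s_n|)=1$, and the evaluation $f(z_0)=\cI(z_0)^{-1}$ giving the stated $L_n(z_0)$.

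There are, however, two substantive gaps. First, your admissibility check asserts that "the only points of $\C$ at which $E_nf$ could fail to be analytic are $\pm x_n$ and $\overline z_0$," which silently assumes $f$ continues analytically across the open slit $(-x_n,x_n)$. It does not, term by term: $g_{\Omega_n}(\cdot,\overline z_l)$ is positive in $\Omega_n$ and vanishes on $I_n$, so $\cI$ (with $|\cI|<1$ inside and $|\cI|=1$ on $I_n$) cannot extend across $I_n$, and $s_n$ changes sign there. The whole point of the two-term ansatz is that the summands interchange under crossing $I_n$ — one must verify $s_n^+=-s_n^-$ together with the boundary relation $\cI^+\cI^-=\frac{z-\overline z_0}{z-z_0}\,\frac{E_n(z)}{\overline{E_n(\overline z)}}$ on $I_n$ so that $f^+=f^-$ and Morera applies. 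This is the crux of the construction and is entirely absent. Second, the optimality stage is a program rather than a proof: the count $\#\fe=n+1$, the mass identity $\sum_j|c_j|=\exp\bigl(\sum_l g_{\Omega_n}(z_0,\overline z_l)\bigr)$, and the phase alignment $\sum_j c_j\overline{f(x_j)}=\sum_j|c_j|$ are exactly the content of the theorem's optimality claim, and you defer all three (as you acknowledge). In particular the sign/phase bookkeeping for the Lagrange weights $c_j$ at nodes interlacing on the two rays of $A_0$ is where such arguments usually either succeed or quietly fail, so it cannot be left as "expected." As written, the proposal is a plausible roadmap to Yuditskii's theorem, not a proof of it.
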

Hence, by Theorem \ref{thm:maincCP} the solution of the extremal problem is given by
\begin{align}\label{def:Qn}
Q_{n,z_0}(z)=E_n(z)\left(\frac{1+s_n(z)}{2s_n(z)}b_{n}(z)^{-(n+1)}+\frac{1-s_n(z)}{2s_n(z)}\frac{(z- z_0)^{n+1}}{(z-\overline z_0)^{n+1}}b_{n}(z)^{n+1}\right),
\end{align}
where $b_n(z)=b_{\Omega_n}(z,\zi)$. We also abbreviate 
$g_n(z)=g_{\Omega_n}(z,\zi)$, $g(z)=g_{\Omega_0}(z,\zi)$, $b(z)=b_{\Omega_0}(z,\zi)$ and $\omega_n(E)=\omega(\zi,E;\Omega_n)$.
Note that \eqref{eq:harmonicMeasureSum} reads
\begin{align}\label{eq:harmonicMeasureSum2}
	\omega_n(I_n)=\frac{1}{n+1}
\end{align}
in this case. 
% Note that, since the periods of $*g_n(z)$ with respect to a curve, $\gamma$, surrounding $I_n$ once, are given by
%\begin{align}\label{eq:Periods}
%\Delta_{\gamma}*g_n(z)=2\pi \omega_n(I_n),
%\end{align}
%this implies that $b_{n}^{n+1}$ is a single valued function on $\Omega_0\setminus I_n$. 
Our goal is to find the limit of $b_{\Omega_\alpha}(u,\infty)^nP_{n,0}(u)$. Due to the conformal invariance of the Green's function, this is equivalent to finding the asymptotics of 
\begin{align*}
f_n(z)=\frac{b(z )^nQ_{n,z_0}(z)}{E_n(z)}.
\end{align*}
By the maximum principle and Montel's theorem, there exist subsequences $n_j$ such that $f_{n_j}$ converges to an analytic function $f$ uniformly on compact subsets of $\Omega_0$. We will show that all subsequences have the same limit.
\begin{lemma}\label{lem:ZeroPart}
	Let $I_n=[-x_n,x_n]$. Then $x_n\to 0$ as $n\to \infty$ and 
	\begin{align*}
	\lim_ns_n(z)^2=\frac{z_0^2-1}{z_0^2}\frac{z^2}{z^2-1},
	\end{align*}
	where the limit is uniformly on compact subsets of $\Omega_0$. Moreover, 
	\begin{align*}
	\lim\limits_{n}\frac{1-s_n(z)}{2s_n(z)}\frac{(z- z_0)^{n+1}}{(z-\overline z_0)^{n+1}}b(z)^nb_{n}(z)^{n+1}=0,
	\end{align*}
	uniformly on compact subsets of $\Omega_0\setminus\{0\}.$
\end{lemma}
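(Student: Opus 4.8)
The plan is to establish the three assertions in turn: the first two follow softly from the normalization $\omega_n(I_n)=\frac1{n+1}$ in \eqref{eq:harmonicMeasureSum2}, while the third rests on an explicit formula for $|u(z)|$ in terms of Green's functions. For $x_n\to0$ I would use monotonicity of harmonic measure in the target. Fix $0<\delta<1$; for $\delta\le x<1$ one has $\Omega_0\setminus[-x,x]\subset\Omega_0\setminus[-\delta,\delta]$, so $u_\delta:=\omega(\cdot,[-\delta,\delta];\Omega_0\setminus[-\delta,\delta])$ is harmonic on $\Omega_0\setminus[-x,x]$, vanishes on $A_0$ and at $\infty$, and is $\le1=\omega(\cdot,[-x,x];\Omega_0\setminus[-x,x])$ on $[-x,x]$; the maximum principle gives $\omega(\zi,[-x,x];\Omega_0\setminus[-x,x])\ge c_\delta:=\omega(\zi,[-\delta,\delta];\Omega_0\setminus[-\delta,\delta])>0$. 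Since $\omega_n(I_n)=\omega(\zi,[-x_n,x_n];\Omega_0\setminus[-x_n,x_n])=\frac1{n+1}\to0$, the inequality $x_n\ge\delta$ can hold only for finitely many $n$, so $x_n\to0$. The second assertion follows by letting $x_n\to0$ in $s_n(z)^2=\frac{z_0^2-1}{z_0^2-x_n^2}\cdot\frac{z^2-x_n^2}{z^2-1}$; on a compact subset of $\Omega_0$ the variable $z$ stays bounded and bounded away from $\pm1\in\partial\Omega_0$, and $z_0^2\ne0$, so the convergence is uniform.

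For the third assertion the crucial ingredient is the identity $|u(z)|=\big|\frac{z-z_0}{z-\overline z_0}\big|=e^{\,g(z)-g_{\Omega_0}(z,z_0)}$ on $\Omega_0$. I would prove it by noting that $g_{\Omega_0}(z,z_0)-g(z)+\log|u(z)|$ is harmonic on $\Omega_0$ — the logarithmic poles of $\log|u(z)|$ at $z_0$ (where $u=0$) and at $\zi=\overline z_0$ (where $u=\infty$) cancel those of $g_{\Omega_0}(\cdot,z_0)$ and of $-g$ — and that it vanishes on $\partial\Omega_0=A_0$, since $|u|\equiv1$ on $A_0\subset\T$ and both Green's functions vanish there; the maximum principle then forces it to be identically zero. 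Together with $|b(z)|=e^{-g(z)}$ and $|b_n(z)|=e^{-g_n(z)}$, this gives
\[
\Big|\frac{(z-z_0)^{n+1}}{(z-\overline z_0)^{n+1}}\,b(z)^n\,b_n(z)^{n+1}\Big|=\exp\!\big(g(z)-(n+1)g_{\Omega_0}(z,z_0)-(n+1)g_n(z)\big).
\]

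To bound the exponent, set $v_n:=g-g_n$; the poles at $\zi$ cancel, so $v_n$ is harmonic and bounded on $\Omega_n$, vanishes on $A_0$, and equals $g$ on $I_n$, whence $0\le v_n\le\max_{I_n}g$; since $x_n\to0$ and $g$ is continuous at $0\in\Omega_0$, $\max_{I_n}g\to g(0)<\infty$, so $v_n$ is bounded uniformly in $n$. Writing $g=g_n+v_n$ and dropping the term $-n\,g_n(z)\le0$, the exponent is at most $v_n-(n+1)g_{\Omega_0}(z,z_0)$, and on any compact $K\subset\Omega_0$ one has $g_{\Omega_0}(\cdot,z_0)\ge\epsilon_K>0$; hence it is $\le g(0)+1-(n+1)\epsilon_K\to-\infty$ uniformly on $K$, and the quantity in the last display tends to $0$ uniformly on $K$. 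Finally, by the second assertion $|s_n(z)|^2\to\big|\frac{z_0^2-1}{z_0^2}\big|\frac{|z|^2}{|z^2-1|}$ uniformly on $K$, and this limit is bounded away from $0$ precisely when $0\notin K$, so $\big|\frac{1-s_n(z)}{2s_n(z)}\big|\le\frac{1+|s_n(z)|}{2|s_n(z)|}$ stays bounded on any compact $K\subset\Omega_0\setminus\{0\}$ for large $n$; multiplying the two bounds yields the third assertion. I expect the main difficulty to be the bookkeeping in this last step: one must simultaneously control the three $n$th-power factors near $\zi\in\Omega_0$, where $g$ (hence $|b|^{-1}$) blows up — this is absorbed by the bounded cancellation $v_n=g-g_n$ — and the factor $1/s_n$, which blows up at $z=0$, this being exactly why uniformity is asserted only away from $0$; the Green's function identity for $|u(z)|$ is what collapses the product of the three power factors into a single exponential whose exponent is manifestly driven to $-\infty$ by the strictly positive term $(n+1)g_{\Omega_0}(\cdot,z_0)$.
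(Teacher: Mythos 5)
Your proof is correct and follows essentially the same route as the paper: monotonicity of harmonic measure in the target set gives $x_n\to 0$, and the identity $\bigl|\tfrac{z-z_0}{z-\overline z_0}\bigr|=e^{\,g(z)-g_{\Omega_0}(z,z_0)}$ together with $|b|=e^{-g}$, $|b_n|=e^{-g_n}$ collapses the power factors into an exponential driven to $-\infty$ by $(n+1)g_{\Omega_0}(\cdot,z_0)$. Your version is in fact slightly more careful than the paper's one-line bound $\bigl|\tfrac{z-z_0}{z-\overline z_0}b_n b\bigr|<1$ on $\Omega_n$, since you make the $n$-independent lower bound $g_{\Omega_0}(\cdot,z_0)\ge\epsilon_K$ explicit, which is what actually yields uniformity on compacts.
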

\begin{proof}
	By the maximum principle (see \cite[12 Ch. IV, Sec 2]{Nev70}) $\omega_n(I_n)$ is an increasing function of $x_n$. Since $\omega_n(I_n)\to 0$ as $x_n\to 0$, we obtain the first statement and the second statement is clear. 
	Finally, we notice that on a compact subset of $\Omega_0\setminus\{0\}$ 
	\begin{align*}
	\lim\limits_{n}\frac{1-s_n(z)}{2s_n(z)}=\frac{1-s(z)}{2s(z)},
	\end{align*}
	which is analytic there. Since 
	\begin{align*}
	\left|\frac{(z- z_0)}{(z-\overline z_0)}b_{n}(z)b(z)\right|<1\quad \text{on }\Omega_n
	\end{align*}
	we obtain the last statement.
\end{proof}
\begin{theorem}\label{thm:main}
	The domain $\Omega_\alpha$ has Szeg\H o-Widom asymptotics. That is, there exists $\phi\in\R$ such that 
	uniformly on compact subsets of $\Omega_\alpha$ we have
	\begin{align}\label{eq:WidomSzegoQ}
		\lim\limits_{n\to\infty}\overline{b_{\Omega_\alpha}(u^*,\infty)^nP_{n,\infty}(u^*)}=e^{i\phi}\frac{1+s(z)}{2s(z)}\frac{b_{\Omega_0}(z,0)}{b_{\Omega_0}(z,\overline{z}_0   )},
	\end{align}
	where 
	\begin{align*}
		s(z)=\sqrt{\frac{z_0^2-1}{z_0^2}\frac{z^2}{z^2-1}},\quad s(z_0)=1.
	\end{align*}
\end{theorem}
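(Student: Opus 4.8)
The whole statement has been engineered so as to come down to one limit. Expanding \eqref{def:Qn} and dividing by $E_n$,
\begin{align*}
f_n(z)=\frac{b(z)^nQ_{n,z_0}(z)}{E_n(z)}=\frac{1+s_n(z)}{2s_n(z)}\,b(z)^nb_n(z)^{-(n+1)}+\frac{1-s_n(z)}{2s_n(z)}\frac{(z-z_0)^{n+1}}{(z-\overline z_0)^{n+1}}\,b(z)^nb_n(z)^{n+1}.
\end{align*}
By Lemma \ref{lem:ZeroPart} the second summand tends to $0$ locally uniformly on $\Omega_0\setminus\{0\}$, and $\frac{1+s_n(z)}{2s_n(z)}\to\frac{1+s(z)}{2s(z)}$, the square root being fixed by $s_n(z_0)=s(z_0)=1$. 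Since $b(z)=b_{\Omega_0}(z,z_\infty)=b_{\Omega_0}(z,\overline z_0)$, it therefore suffices to prove
\begin{align*}
\lim_{n\to\infty}b(z)^nb_n(z)^{-(n+1)}=\frac{b_{\Omega_0}(z,0)}{b_{\Omega_0}(z,\overline z_0)}
\end{align*}
locally uniformly on $\Omega_0\setminus\{0\}$: the puncture at $0$ is then harmless, because \eqref{eq:QPRelation} and the Bernstein--Walsh inequality give $|f_n|\le1$ on all of $\Omega_0$, so the limit extends analytically across $0$ and, being unique, is attained by the whole sequence $f_n$.

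The crux is the rate at which $g_n$ approaches $g$ as the slit $I_n=[-x_n,x_n]$ collapses to $0$. Set $\Phi_n=g-g_n$. Since $g$ and $g_n$ share the logarithmic pole at $z_\infty$, $\Phi_n$ extends to a bounded nonnegative harmonic function on $\Omega_n$ which vanishes on $A_0$ and equals $g|_{I_n}$ on $I_n$, so $\Phi_n(z)=\int_{I_n}g\,\d\omega(z,\cdot\,;\Omega_n)$. By Lemma \ref{lem:ZeroPart}, $x_n\to0$, and continuity of $g$ at $0\in\Omega_0$ gives $g|_{I_n}=g(0)+o(1)$, whence $\Phi_n(z)=(g(0)+o(1))\,\omega(z,I_n;\Omega_n)$. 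The normalized harmonic measures $\omega(\cdot\,,I_n;\Omega_n)/\omega_n(I_n)$ are positive harmonic on $\Omega_n$, vanish on $A_0$ and equal $1$ at $z_\infty$, hence form a normal family by Harnack; a subsequential limit is a nonzero positive harmonic function on $\Omega_0\setminus\{0\}$ vanishing on $\partial\Omega_0$, hence (analysis of the isolated singularity, whose singular part is a nonnegative multiple of $\log\tfrac1{|z|}$, plus the maximum principle) a positive multiple of $g_{\Omega_0}(\cdot\,,0)$, the multiple fixed by the value at $z_\infty$. Using $\omega_n(I_n)=\tfrac{1}{n+1}$ from \eqref{eq:harmonicMeasureSum2} and $g_{\Omega_0}(z_\infty,0)=g_{\Omega_0}(0,z_\infty)=g(0)$, this gives $n\Phi_n(z)\to g_{\Omega_0}(z,0)$ locally uniformly on $\Omega_0\setminus\{0\}$.

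To pass to the complex Green's functions, work on a compact simply connected $K\subset\Omega_0\setminus\{0\}$ (this also sidesteps the fact that $\Omega_n$ is doubly connected, so that $b/b_n$ is multivalued while $(b/b_n)^{n+1}$ is single-valued): write $b(z)^nb_n(z)^{-(n+1)}=b(z)^{-1}\bigl(b(z)/b_n(z)\bigr)^{n+1}$ with $\log\bigl(b(z)/b_n(z)\bigr)^{n+1}=-(n+1)(\Phi_n+i*\Phi_n)$ on $K$; since $(n+1)\Phi_n\to g_{\Omega_0}(\cdot\,,0)$ and harmonic conjugation is continuous for locally uniform convergence, $\bigl(b(z)/b_n(z)\bigr)^{n+1}\to b_{\Omega_0}(z,0)$ up to a unimodular constant, and thus $b(z)^nb_n(z)^{-(n+1)}\to b_{\Omega_0}(z,0)/b(z)$, consistent with analyticity at $0$ where the simple pole of $\frac{1+s(z)}{2s(z)}$ is cancelled by the simple zero of $b_{\Omega_0}(z,0)$. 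Finally, transporting to $\Omega_\alpha$: by \eqref{eq:QPRelation} and conformal invariance of the Green's function, $b_{\Omega_\alpha}(u,\infty)^nP_{n,0}(u)=e^{i\phi}f_n(z)$ up to a unimodular constant, and combining the involution $P_{n,\infty}=e^{i\phi}P^*_{n,0}$ with the elementary identities $b_{\Omega_\alpha}(u,\infty)=b_{\Omega_\alpha}(u,0)/u$ and $\overline{b_{\Omega_\alpha}(1/\overline u,\infty)}=b_{\Omega_\alpha}(u,0)$ (both immediate from the invariance of $A_\alpha$ under $u\mapsto1/\overline u$) turns this into \eqref{eq:WidomSzegoQ} with $u^*=1/\overline u$, all accumulated unimodular factors being absorbed into $e^{i\phi}$.

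The main obstacle is the normalized harmonic measure limit $n\,\omega(z,I_n;\Omega_n)\to g_{\Omega_0}(z,0)/g(0)$: everything else is bookkeeping around \eqref{def:Qn}, Lemma \ref{lem:ZeroPart} and the conformal maps, but this step genuinely uses the collapse of the slit. It rests on the normal-families argument together with the classification of positive harmonic functions on $\Omega_0\setminus\{0\}$ vanishing on $\partial\Omega_0$, and on the exact normalization $\omega_n(I_n)=\tfrac{1}{n+1}$ supplied by Theorem \ref{thm:maincCP} to pin down the constant; some care with boundary regularity (so the limit of the harmonic measures still vanishes on $\partial\Omega_0$) and with the branch of the harmonic conjugate is also needed.
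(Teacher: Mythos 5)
Your overall skeleton coincides with the paper's: expand \eqref{def:Qn}, kill the second summand by Lemma \ref{lem:ZeroPart}, reduce everything to the single limit $b(z)^nb_n(z)^{-(n+1)}\to b_{\Omega_0}(z,0)/b_{\Omega_0}(z,\overline z_0)$, and finish with the involution $P_{n,\infty}=e^{i\phi}P_{n,0}^*$ together with $|u\,b_{\Omega_\alpha}(u,\infty)|=|b_{\Omega_\alpha}(u,0)|$. All of that bookkeeping is correct (your remarks on the multivaluedness of $b/b_n$ versus the single-valuedness of $(b/b_n)^{n+1}$, and on removing the puncture at $0$ via the Bernstein--Walsh bound $|f_n|\le 1$, are points the paper leaves implicit). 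The one place you diverge is the crux you yourself identify, and there your argument has a hole. You write $\Phi_n(z)=\int_{I_n}g\,\mathrm{d}\omega(z,\cdot\,;\Omega_n)$ and then try to control $n\,\omega(z,I_n;\Omega_n)$ by a normal-families argument, concluding that any subsequential limit, being positive harmonic on $\Omega_0\setminus\{0\}$ and vanishing on $\partial\Omega_0$, must be a multiple of $g_{\Omega_0}(\cdot,0)$. That classification is false as invoked: $\Omega_0$ carries nonzero positive harmonic functions with vanishing boundary values on $A_0$, namely the minimal (Martin) harmonic functions attached to the boundary points $\pm1$ and $\infty$ --- e.g.\ the pullback of $\operatorname{Im}w$ under $w(z)=\sqrt{(z-1)/(z+1)}$, which is positive on $\Omega_0$, vanishes on $A_0\setminus\{-1\}$, and is not a multiple of the Green's function. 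The "maximum principle" you appeal to does not exclude these (they are unbounded), so a subsequential limit of $\omega(\cdot,I_n;\Omega_n)/\omega_n(I_n)$ could a priori contain such a component, and then the normalization at the single point $z_\infty$ no longer pins down the coefficient of $g_{\Omega_0}(\cdot,0)$. Closing this requires an extra domination estimate (e.g.\ $\omega(z,I_n;\Omega_n)\le g_{\Omega_0}(z,0)/\min_{I_n}g_{\Omega_0}(\cdot,0)$ together with control of $(n+1)/\min_{I_n}g_{\Omega_0}(\cdot,0)$), which you do not supply; the phrase "some care with boundary regularity" understates what is missing.

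The paper avoids this entirely by using the identity in the symmetric orientation: by symmetry of the Green's function, \eqref{eq:FormulaGreen} gives
\begin{align*}
n\bigl(g(z)-g_n(z)\bigr)=\int_{I_n}g_{\Omega_0}(z,x)\,n\,\omega_n(\mathrm{d}x),
\end{align*}
where now the measure being rescaled is $\omega_n=\omega(z_\infty,\cdot\,;\Omega_n)$, whose total mass on $I_n$ is exactly $\tfrac1{n+1}$ by \eqref{eq:harmonicMeasureSum2}. Hence $\chi_{I_n}\,n\,\omega_n(\mathrm{d}x)$ converges weak-$*$ to $\delta_0$, and since $x\mapsto g_{\Omega_0}(z,x)$ is continuous near $0$ for fixed $z\ne0$, the integral converges to $g_{\Omega_0}(z,0)$ with no normal-families or Martin-boundary input at all. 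I recommend you replace your second paragraph by this two-line computation (you already have both ingredients: the Dirichlet-problem identity and \eqref{eq:harmonicMeasureSum2}); the rest of your write-up then stands.
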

\begin{proof}
	Solving the Dirichlet problem for the harmonic function 
	\[
	h(z_1)=g_{\Omega_0}(z_1,z)-g_{\Omega_n}(z_1,z)
	\] 
	in  $\Omega_n$ shows 
	\[ 
	h(z_1)=\int_{I_n}g_{\Omega_0}(z,x)\omega(z_1,\dx;\Omega_n).
	 \]
	The symmetry of the Green's function with respect to the variables $z$ and $z_1$ leads to
	\begin{align}\label{eq:FormulaGreen}
	g_{\Omega_0}(z,z_1)-g_{\Omega_n}(z,z_1)=\int_{I_n}g_{\Omega_0}(z,x)\omega(z_1,\dx;\Omega_n).
	\end{align}
	Therefore,
	\begin{align*}
		\log\left|\frac{b(z)^n}{b_n(z)^n}\right|=n(g_n(z)-g(z))=-\int_{I_n}g_{\Omega_0}(z,x)n\omega_n(\dx).
	\end{align*}
	By \eqref{eq:harmonicMeasureSum2}, $\chi_{I_n}n\omega_n(\dx)$ converges to the delta distribution and therefore
	\begin{align*}
		\lim\limits_{n\to\infty}\log\left|\frac{b(z)^n}{b_n(z)^n}\right|= -g_{\Omega_0}(z,0).
	\end{align*}
	In the same way we see that
	$
		\lim_{n\to\infty}\log|b_{\Omega_n}(z,\overline z_0)|=-g_{\Omega_0}(z,\overline z_0).
	$
	Therefore, in combination with Lemma \ref{lem:ZeroPart} we obtain for the limit function $f$ that 
	\begin{align*}
		|f(z)|=\left|\frac{1+s(z)}{2s(z)}\frac{b_{\Omega_0}(z,0)}{b_{\Omega_0}(z,\overline{z}_0)}\right|,
	\end{align*}
	and hence
	\begin{align*}
			f(z)=\frac{1+s(z)}{2s(z)}\frac{b_{\Omega_0}(z,0)}{b_{\Omega_0}(z,\overline{z}_0)}.
	\end{align*}
	This shows
	\begin{align}\label{WidomSzegoQinZero}
		\lim_{n\to\infty}\frac{b(z)^nQ_{n,z_0}(z)}{E_n(z)}=e^{i\phi}\frac{1+s(z)}{2s(z)}\frac{b_{\Omega_0}(z,0)}{b_{\Omega_0}(z,\overline{z}_0)}.
	\end{align}
	Due to the symmetry of the domain with respect ot the real line we have   $\overline{b_{\Omega_\alpha}(\overline{u},\infty)}=b_{\Omega_\alpha}(u,\infty)$. This and the conformal invariance of the Green's function leads to
	\begin{align*}
		b_{\Omega_\alpha}(u,\infty)^nP_{n,\infty}(u)&=b_{\Omega_\alpha}(u,\infty)^ne^{i\phi}u^n\overline{P_{n,0}(1/\overline{u})}\\
		&=	e^{i\phi}b_{\Omega_\alpha}(u,0)^n\overline{P_{n,0}(u^*)}\\
		&=e^{i\phi}\overline{b_{\Omega_\alpha}(u^*,\infty)^nP_{n,0}(u^*)}.
	\end{align*}
	This concludes the proof.
\end{proof}
As a corollary of Theorem \ref{thm:main}, we obtain \eqref{eq:ThirDetasymptotics}. Recall that 
\begin{align*}
f_n\sim g_n\quad\iff\quad \lim\limits_{n}\frac{f_n}{g_n}=1
\end{align*}
and
\begin{align*}
	\Cap(A_\alpha):=\lim_{u\to\infty}|ub_{\Omega_\alpha}(u,\infty)|.
\end{align*}
\begin{corollary}\label{cor:assymptoticsLeastdevi}
	Let $T_n$ denote the Chebyshev polynomials of $A_\alpha$. Then 
	\begin{align*}
	\|T_n\|_{A_\alpha}\sim\cot(\alpha/4)\Cap(A_\alpha)^{n+1}.
	\end{align*}
\end{corollary}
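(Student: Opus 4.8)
The plan is to derive the claimed asymptotics directly from Theorem~\ref{thm:main}, extracting the multiplicative constant from the behavior of the limit function at the special point $u=\infty$ (equivalently $z=z_\infty=\overline z_0$). First I would recall that $\|T_n\|_{A_\alpha}=1/L_n(\infty)$, and that by the star-map relation $P_{n,\infty}=e^{i\phi}P^*_{n,0}$ we have $L_n(\infty)=|P_{n,0}(0)|$, which by \eqref{eq:QPRelation} equals $|Q_{n,z_0}(z_0)|/|E_n(z_0)|$. So I need the large-$n$ behavior of $|Q_{n,z_0}(z_0)|/|E_n(z_0)|$, and I would read this off from \eqref{WidomSzegoQinZero}: dividing by $b(z)^n$ on the left means
\begin{align*}
\frac{Q_{n,z_0}(z)}{E_n(z)}\sim e^{i\phi}b(z)^{-n}\,\frac{1+s(z)}{2s(z)}\frac{b_{\Omega_0}(z,0)}{b_{\Omega_0}(z,\overline z_0)}.
\end{align*}

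Next I would evaluate this at $z=z_0$. Since $s(z_0)=1$, the factor $\tfrac{1+s(z)}{2s(z)}$ equals $1$ there, so
\begin{align*}
L_n(\infty)=\frac{|Q_{n,z_0}(z_0)|}{|E_n(z_0)|}\sim |b(z_0)|^{-n}\,\left|\frac{b_{\Omega_0}(z_0,0)}{b_{\Omega_0}(z_0,\overline z_0)}\right|.
\end{align*}
Here $b_{\Omega_0}(z_0,\overline z_0)=b_{\Omega_0}(z_0,z_\infty)=b(z_0)$ by definition, so $L_n(\infty)\sim |b(z_0)|^{-(n+1)}\,|b_{\Omega_0}(z_0,0)|$, and hence
\begin{align*}
\|T_n\|_{A_\alpha}\sim \frac{|b(z_0)|^{n+1}}{|b_{\Omega_0}(z_0,0)|}.
\end{align*}
It remains to identify $|b(z_0)|$ with $\Cap(A_\alpha)$ and $|b_{\Omega_0}(z_0,0)|$ with $\tan(\alpha/4)$ (so that its reciprocal is $\cot(\alpha/4)$). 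The first identity is conformal invariance of the Green's function together with the definition $\Cap(A_\alpha)=\lim_{u\to\infty}|u\,b_{\Omega_\alpha}(u,\infty)|$: under $u(z)$, $g_{\Omega_0}(z,z_\infty)=g_{\Omega_\alpha}(u,\infty)$, and since $z=z_0$ corresponds to $u=\infty$ one computes $\lim_{u\to\infty}|u\,b_{\Omega_\alpha}(u,\infty)|$ as $|b(z_0)|$ times the derivative factor from the Möbius change of variables; I would check that this factor is $1$ for the chosen normalization, or absorb it correctly.

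The main computational obstacle is the second identity, $|b_{\Omega_0}(z_0,0)|=\tan(\alpha/4)$. To get it I would use that $\Omega_0=(\C\setminus\R)\cup(-1,1)$ is a standard domain whose complex Green's function with any pole is explicit via the conformal map of $\Omega_0$ onto the unit disc: the map $w\mapsto$ (Joukowski-type) sending $A_0=\R\setminus(-1,1)$ to the circle, so that $b_{\Omega_0}(z,\zeta)$ becomes a Blaschke factor in the disc coordinate. Concretely, $\varphi(z)=z-\sqrt{z^2-1}$ (with the branch mapping $\Omega_0$ into $\D$) gives $b_{\Omega_0}(z,\zeta)=\dfrac{\varphi(z)-\varphi(\zeta)}{1-\overline{\varphi(\zeta)}\varphi(z)}$ up to a unimodular constant; evaluating at $z=z_0=i\tan(\alpha/2)$ and $\zeta=0$, using $\varphi(0)=-i$ (or $i$, depending on branch) and $\varphi(i\tan(\alpha/2))=i\tan(\alpha/2)-\sqrt{-\tan^2(\alpha/2)-1}=i\tan(\alpha/2)-i\sec(\alpha/2)$, a half-angle simplification should collapse the modulus of the Blaschke quotient to $\tan(\alpha/4)$. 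This is the step where care with branches of the square root and with half-angle identities is essential; everything else is bookkeeping. Once both identities are in place, combining them yields $\|T_n\|_{A_\alpha}\sim\cot(\alpha/4)\,\Cap(A_\alpha)^{n+1}$, which is \eqref{eq:ThirDetasymptotics}.
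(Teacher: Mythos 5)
Your reduction is the same as the paper's: evaluate \eqref{WidomSzegoQinZero} at $z=z_0$, use $s(z_0)=1$ and $b_{\Omega_0}(z_0,\overline z_0)=b(z_0)$ to get $\|T_n\|_{A_\alpha}\sim |b(z_0)|^{n+1}/|b_{\Omega_0}(z_0,0)|$, and then identify the two constants $|b(z_0)|=\Cap(A_\alpha)$ and $|b_{\Omega_0}(z_0,0)|=\tan(\alpha/4)$. That skeleton is exactly the paper's proof. Both constant identifications, however, are set up incorrectly as written.

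First, $z=z_0$ corresponds to $u=0$, not to $u=\infty$ (it is $z_\infty=\overline z_0$ that corresponds to $u=\infty$), so conformal invariance gives $|b(z_0)|=e^{-g_{\Omega_\alpha}(0,\infty)}$, and the identity $e^{-g_{\Omega_\alpha}(0,\infty)}=\Cap(A_\alpha)$ is not a Jacobian factor to be ``absorbed'': it is the reflection symmetry of $\Omega_\alpha$ under $u\mapsto 1/\overline u$, which gives $|u\,b_{\Omega_\alpha}(u,\infty)|=|b_{\Omega_\alpha}(u,0)|$ on all of $\Omega_\alpha$ (maximum principle applied to $g_{\Omega_\alpha}(u,\infty)-g_{\Omega_\alpha}(u,0)-\log|u|$, which vanishes on $A_\alpha\subset\T$), hence $\Cap(A_\alpha)=|b_{\Omega_\alpha}(\infty,0)|$. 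This is precisely the chain of equalities in the paper. Second --- and this is the step that would actually fail --- $\varphi(z)=z-\sqrt{z^2-1}$ uniformizes $\overline\C\setminus[-1,1]$, not $\Omega_0=\overline\C\setminus\bigl(\R\setminus(-1,1)\bigr)$. With the branch cut placed on $A_0$, $\varphi$ sends the \emph{interior} point $0$ to the unit circle (your own computation gives $\varphi(0)=\mp i$, which is unimodular), and sends boundary points $x>1$ to positive reals off $\T$; consequently your Blaschke quotient would be unimodular and yield $|b_{\Omega_0}(z_0,0)|=1$. The correct uniformizer is the one already built into \eqref{def:lambda}: $w(z)=\sqrt{(z-1)/(z+1)}:\Omega_0\to\C_+$, with $w(0)=i$, $w(z_0)=e^{i(\pi-\alpha)/2}$, and $|b_{\C_+}(w_0,i)|=\bigl|\tfrac{e^{i(\pi-\alpha)/2}-i}{e^{i(\pi-\alpha)/2}+i}\bigr|=\tan(\alpha/4)$. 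With these two corrections the remaining bookkeeping in your argument goes through and reproduces the paper's proof.
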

\begin{proof}
	Due to \eqref{eq:leastDevEvalAtOrigin}, 
	\begin{align*}
	L_n(0)=\frac{1}{\|T_n\|_{A_\alpha}}.
	\end{align*}
	Let $w:\Omega_0\to\C_+$ be defined by $w(z)=\sqrt{\frac{z-1}{z+1}}$ and $w(z_0)=w_0=e^{i(\pi-\alpha)/2}$. Since $w(0)=i$, we obtain
	\begin{align*}
		|b_{\Omega_0}(z_0,0)|=|b_{\C_+}(w_0,i)|=\left|\frac{e^{i(\pi-\alpha)/2}-i}{e^{i(\pi-\alpha)/2}+i}\right|=\tan(\alpha/4).
	\end{align*}
	 Moreover, 
	\begin{align*}
	\Cap(A_\alpha)&=|\lim_{u\to\infty}ub_{\Omega_\alpha}(u,\infty)|=|\lim_{u\to\infty}b_{\Omega_\alpha}(u,0)|=|b_{\Omega_\alpha}(\infty,0)|.
	\end{align*}
	Thus, Theorem \ref{thm:main} shows that
	\begin{align*}
	\lim\limits_{n\to\infty}\left|\frac{\Cap(A_\alpha)^{n}}{\|T_n\|_{A_\alpha}}\right|=\frac{\tan(\alpha/4)}{\Cap(A_\alpha)},
	\end{align*}
	which concludes the proof.
\end{proof}
The next natural question is to solve this problem not only for $u_0=0$, but for an arbitrary point $u_0\in\Omega_\alpha$. As before, due to the symmetry of the domain, we can reduce it to $u_0\in \D$. Namely, if $|u_0|>1$, we have
\begin{align}\label{eq:symmetryrelation}
P_{n,u_0}=P^*_{n,u_0^{*}},\quad b_{\Omega_\alpha}(u,\infty)^nP_{n,u_0}(u)=\overline{b_{\Omega_\alpha}(u^*,\infty)^nP_{n,u_0^*}(u^*)}.
\end{align}
\begin{lemma}
	Let $|u_0|<1$ and $z_{u_0}=z(u_0)$. Let $K_0$ be the unique circle that passes through $\zu$ and $\zub$ such that $\Omega_0$ is symmetric with respect to reflection by $K_0$. Moreover, let ${x_0}=K_0\cap (-1,1)$ and
	\begin{align*}
		s(z,\zu)=\sqrt{\frac{\zu^2-1}{(\zu-x_0)^2}\frac{(z-x_0)^2}{z^2-1}}, \quad s(\zu,\zu)=1.
	\end{align*}
	Then there exists $\phi\in\R$ such that uniformly on compact subsets of $\Omega_\alpha$
	\begin{align*}
		\lim\limits_{n\to\infty}b_{\Omega_\alpha}(u,\infty)^nP_{n,u_0}(u)=e^{i\phi}\frac{1+s(z,\zu)}{2s(z,\zu)}\frac{b_{\Omega_0}(z,x_0)}{b_{\Omega_0}(z,\zub)}.
	\end{align*} 
\end{lemma}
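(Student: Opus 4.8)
The plan is to deduce the lemma from Theorem~\ref{thm:main} via a conformal automorphism of $\Omega_0$ carrying $z_{u_0}$ onto the positive imaginary axis. The reduction \eqref{eq:QPRelation} holds verbatim for every $u_0\in\D$: $P_{n,u_0}(u)=e^{i\phi}Q_{n,\zu}(z)/E_n(z)$ with $E_n(z)=(z-\zi)^n$, where $Q_{n,\zu}\in\cP_n$ maximizes $|Q_n(\zu)/E_n(\zu)|$ under $\|Q_n\|_{\Pi(E_n)}\le1$; and by conformal invariance of the Green's function $b_{\Omega_\alpha}(u,\infty)=e^{-ic_0}b_{\Omega_0}(z,\zi)$ for a fixed $c_0\in\R$, so it suffices to find the limit of $b_{\Omega_0}(z,\zi)^nQ_{n,\zu}(z)/E_n(z)$. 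Since $\zu$ need not lie on $i\R_{>0}$, which Theorem~\ref{thm:maincCP} requires, I would first move it there.

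The Möbius maps of $\overline\C$ that preserve $A_0$ and fix $1$ and $-1$ form a one-parameter group $G$ of hyperbolic transformations with fixed points $\pm1$; they have real coefficients and positive determinant, hence preserve $\C_+$ and $\Omega_0$. The $G$-orbit of $\zu\in\C_+$ is the circular arc through $-1,\zu,1$, whose supporting circle is centered on the perpendicular bisector $i\R$ of $[-1,1]$ and therefore meets $i\R_{>0}$ in its topmost point; so there is $\sigma\in G$ with $\sigma(\zu)=ir$, $r>0$, whence also $\sigma(\zub)=-ir$. Since $\sigma$ carries reflections preserving $\Omega_0$ again to such reflections and $i\R$ is the only symmetry circle of $\Omega_0$ through $\pm ir$, we get $\sigma(K_0)=i\R$ and $\sigma(x_0)=\sigma\big(K_0\cap(-1,1)\big)=0$. (If $\Re\zu=0$, then $\sigma=\id$ and this step is vacuous.)

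Now transport the weighted extremal problem by $w=\sigma(z)$: clearing denominators makes $Q_n(z)\mapsto\widetilde Q_n(w):=(\text{denominator of }\sigma^{-1})^{n}\,Q_n(\sigma^{-1}(w))$ a linear bijection of $\cP_n$, the weight becomes (up to a constant) $\widetilde E_n(w)=(w-\sigma(\zi))^n$ with $\sigma(\zi)=\overline{\sigma(z_0)}\in\C_-$, one has $|\widetilde Q_n/\widetilde E_n|=|Q_n/E_n|$ pointwise, and $\sigma(A_0)=A_0$. Thus the transformed problem is exactly of the form in Theorem~\ref{thm:maincCP}, with $Z$ equal to $n$ copies of $\sigma(\zi)$ and evaluation point $ir\in i\R_{>0}$. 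Invoking Theorem~\ref{thm:maincCP} and then repeating the argument of Lemma~\ref{lem:ZeroPart} and of the proof of Theorem~\ref{thm:main} word for word (the optimal interval $I_n=[-x_n,x_n]$ again shrinks to $\{0\}$ since $\omega(\sigma(\zi),I_n;\Omega_0\setminus I_n)=\tfrac1n+o(\tfrac1n)$, hence $n\,\omega(\sigma(\zi),\,\cdot\,;\Omega_0\setminus I_n)\to\delta_0$, and the second summand in the analogue of \eqref{def:Qn} vanishes off the origin) gives, uniformly on compact subsets,
\begin{align*}
\lim_{n\to\infty}\frac{b_{\Omega_0}(w,\sigma(\zi))^n\,\widetilde Q_{n,ir}(w)}{\widetilde E_n(w)}
=e^{i\phi}\,\frac{1+\widetilde s(w)}{2\widetilde s(w)}\,\frac{b_{\Omega_0}(w,0)}{b_{\Omega_0}(w,-ir)},\qquad
\widetilde s(w)=\sqrt{\frac{(ir)^2-1}{(ir)^2}\,\frac{w^2}{w^2-1}}.
\end{align*}

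Finally, undoing $\sigma$: since $Q_{n,\zu}(z)/E_n(z)=\widetilde Q_{n,ir}(\sigma z)/\widetilde E_n(\sigma z)$ and $b_{\Omega_0}(z,\zi)=e^{-ic_1}b_{\Omega_0}(\sigma z,\sigma(\zi))$ by conformal invariance, the sought limit is $e^{i\phi}$ times the right-hand side above at $w=\sigma z$, the factors $e^{-in(c_0+c_1)}$ being absorbed into $\phi$ just as in the proof of Theorem~\ref{thm:main} (via Montel's theorem and the modulus computation). Conformal invariance also gives $b_{\Omega_0}(\sigma z,0)\propto b_{\Omega_0}(z,x_0)$ and $b_{\Omega_0}(\sigma z,-ir)\propto b_{\Omega_0}(z,\zub)$ up to unimodular constants (recall $\sigma^{-1}(0)=x_0$, $\sigma^{-1}(-ir)=\zub$), and the Möbius covariance
\begin{align*}
\frac{(\sigma z-\sigma a)(\sigma z-\sigma b)}{(\sigma z-\sigma c)^2}=\frac{(z-a)(z-b)}{(z-c)^2}\cdot\frac{(rc+s)^2}{(ra+s)(rb+s)}\qquad\Big(\sigma(z)=\tfrac{pz+q}{rz+s}\Big),
\end{align*}
applied with $a=1$, $b=-1$, $c=x_0$ together with $\sigma(\pm1)=\pm1$, $\sigma(x_0)=0$, $\sigma(\zu)=ir$, turns $\widetilde s(\sigma z)$ into $s(z,\zu)=\sqrt{\tfrac{\zu^2-1}{(\zu-x_0)^2}\,\tfrac{(z-x_0)^2}{z^2-1}}$ with $s(\zu,\zu)=1$; combined with the first paragraph this yields the assertion, with convergence uniform on compact subsets of $\Omega_\alpha$. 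I expect the main obstacle to be the geometric reduction above — producing $\sigma\in G$ and verifying that it carries $(\zi,\zu,K_0,x_0)$ to the standard configuration $(\overline{\sigma(z_0)},ir,i\R,0)$ so that Theorem~\ref{thm:maincCP} applies — together with the routine but delicate bookkeeping that all the unimodular and normalization constants collapse into a single $e^{i\phi}$ and that the covariance reproduces precisely the stated form of $s(\cdot,\zu)$.
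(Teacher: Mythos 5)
Your argument is correct and is essentially the paper's own proof: both reduce to the already-solved configuration by conjugating with a M\"obius automorphism that sends the evaluation point to the positive imaginary axis (your hyperbolic $\sigma$ fixing $\pm1$ is exactly the inverse of the paper's FLT $\phi=z\circ\tilde z^{-1}$, which the paper builds as a Blaschke factor in the $u$-plane), then rerun the analysis of Theorem \ref{thm:maincCP}, Lemma \ref{lem:ZeroPart} and Theorem \ref{thm:main} and pull the formula back by M\"obius covariance. Your write-up is, if anything, more explicit than the paper's about the bookkeeping (the pole landing at $\sigma(\zi)$ rather than at $-ir$, and the covariance identity converting $\tilde s(\sigma z)$ into $s(z,\zu)$).
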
	
\begin{proof}
	Let $u_0\in\D$. By a M\"obius transformation $\psi$, (aka Blaschke factor of the disc) we map $u_0\mapsto 0$ such that $A_\alpha$ is mapped onto $A_{\alpha'}$ for some $\alpha'$, i.e.,  $A_{\alpha'}$ is still symmetric with respect to the real axis and $1\in A_{\alpha'}$. Then we compose this map with $z$ (related to $\alpha'$) of the previous section in order to obtain a conformal map $\tilde z:\Omega_\alpha\to\Omega_0$ such that
	\begin{align*}
	\tilde z(e^{i\alpha})=-1,\quad  \tilde z(e^{-i\alpha})=1,\quad \tilde{z}(u_0)=i\tan(\alpha'/2).
	\end{align*} 
	Hence, we can apply exactly the same procedure in proving the asymptotics
	\begin{align*}
	\lim\limits_{j}\frac{b(\tilde z,\tzi;\Omega_0)^{n_j}\tilde Q_{n_j,\tilde z_0}(\tilde z)}{\tilde E_{n_j}(\tilde z)}=\frac{1+ \tilde s(\tilde z)}{2\tilde s(\tilde z)}\frac{b(\tilde z,0)}{b(\tilde z,\overline{\tilde z(u_0)})},
	\end{align*}
	where 
	\begin{align*}
		\tilde s(\tilde z)^2=\frac{\tilde z(u_0)^2-1}{\tilde z(u_0)^2}\frac{\tilde z^2}{\tilde z^2-1}.
	\end{align*}
	The map $\phi:\Omega_0\to\Omega_0$ with $\phi(\tilde z)=z$ is a fractional linear transformation (FLT) with $\phi(\R)=\R$. 
	\begin{align*}
	\begin{xy}
	\xymatrix{
		\overline{\C}\setminus A_\alpha\ar^z[r]\ar^\psi[d]&  \Omega_0\\
		\overline{\C}\setminus A_\alpha\ar^{\tilde z}[r]& \Omega_0\ar^\phi[u]
	}
	\end{xy}
	\end{align*}
	 Due to properties of conformal maps in particular of FLTs we obtain $\phi(i\R)=K_0$, $\phi(0)=x_0$, $\phi\left(\tilde z(u_0)\right)=\zu$ and $\phi\left(\overline{\tilde z(u_0)}\right)=\zub$, which concludes the proof.
%	 Hence, 
%	\begin{align*}
%		\tilde s(\tilde z)=\sqrt{\frac{\zu^2-1}{(\zu-x_0)^2}\frac{(z-x_0)^2}{z^2-1}},
%	\end{align*}
%	which concludes the proof.
\end{proof}
	\begin{proof}[Proof of Theorem \ref{thm:szegoWidomDisc}]
		The function $\lambda$ given by \eqref{def:lambda} is a composition of the maps $z:\Omega_\alpha\to \Omega_0$, $w:\Omega_0\to \C_+$, defined by
		$
		w(z)=\sqrt{\frac{z-1}{z+1}}
		$
		and $\tilde \lambda (w):\C_+\to\Pi$ defined by $\lambda(w)=\sqrt{-i w}$. Let $w_0=w(z_{u_0})$. Using the reflection principle and that FLTs map circles onto circles, we obtain that $w(x_0)=i|w_0|$ and $w(\zub)=-\overline{w}_0$. Hence,
		\begin{align*}
				\lim\limits_{n\to\infty}b_{\Omega_\alpha}(u,\infty)^nP_{n,u_0}(u)=e^{i\phi}\frac{1}{2}\left(1+\frac{v(w,w_0)}{v(w_0,w_0)}\right)\frac{w-i|w_0|}{w+i|w_0|}\frac{w+w_0}{w+\overline{w}_0},
		\end{align*}
		where 
		$
		v(w,w_0)=\frac{w}{(w+i|w_0|)(w-i|w_0|)}.
		$
		By definition $w=i\lambda^2$, which proves \eqref{eq:formulaWidomSzegoU0}.
	\end{proof}
 We define 
 \begin{align*}
 	L(u):=\lim_n e^{-ng_{\Omega_\alpha}(u,\infty)}L_n(u).
 \end{align*}
 Note that \eqref{eq:symmetryrelation} in particular implies that $L(u_0^*)=L(u_0)$. 
 
 Let us point out that the fact that we don't give a formula for $P_{n,u_0}$ for $|u_0|=1$ is just a consequence of our technique. Indeed, this question leads to a real Chebyshev problem, which was already introduced and solved by Chebyshev \cite{Cheb59}. Later this problem was widely discussed; see e.g. \cite{Akh53,Markov}. We only use that for each $n$ there exists a maximizer $P_{n,u_0}$(either by referring to the real Chebyshev problem or by compactness of $\cP_{n,\alpha}$). Due to Montel's theorem, $b_{\Omega_\alpha}(u,\infty)^nP_{n,u_0}(u)$ has a convergent subsequence, i.e., there exists $f(u)$ such that
 $
 f(u)=\lim_{j}b_{\Omega_\alpha}(u,\infty)^{n_j}P_{n_j,u_0}(u).
$
 Set
$
 L(u_0)=f(u_0).
$
 We will see that with this definition $L(u)$ is continuous and therefore this value is independent of the particular choice of the subsequence. 
 \begin{lemma}
 	$L_n(u)$ and $L(u)$ are continuous on $\C\setminus A_\alpha$ and $\Omega_\alpha$, respectively.
 \end{lemma}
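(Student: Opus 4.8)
The statement has two parts: continuity of the finite-level upper envelope $L_n$ on $\C\setminus A_\alpha$, and continuity of the limit function $L$ on $\Omega_\alpha$. For the first part I would argue directly from the definition $L_n(u)=\sup\{|P_n(u)|:\ P_n\in\cP_{n,\alpha}\}$. The set $\cP_{n,\alpha}$ is a compact, convex, balanced subset of the finite-dimensional space $\cP_n$ (it is the closed unit ball of the seminorm $P\mapsto\|P\|_{A_\alpha}$, which is in fact a norm on $\cP_n$ since $A_\alpha$ has infinitely many points). For fixed $u$ the map $P_n\mapsto|P_n(u)|$ is continuous on this compact set, so the supremum is attained; moreover $(u,P_n)\mapsto P_n(u)$ is jointly continuous, so $L_n$ is lower semicontinuous as a sup of continuous functions, and an equicontinuity/compactness argument (all $P_n\in\cP_{n,\alpha}$ have uniformly bounded coefficients, hence are uniformly Lipschitz on compact subsets of $\C$) gives upper semicontinuity. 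Concretely: writing $u_k\to u$, pick near-maximizers $P^{(k)}$ for $u_k$; by compactness of $\cP_{n,\alpha}$ a subsequence converges to some $P\in\cP_{n,\alpha}$, and then $L_n(u_k)=|P^{(k)}(u_k)|\to|P(u)|\le L_n(u)$ along that subsequence, while lower semicontinuity gives the reverse inequality. This yields continuity of $L_n$ on all of $\C$, in particular on $\C\setminus A_\alpha$.

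\textbf{Continuity of $L$.} For the limit function I would exploit the explicit description already obtained. By the Lemma preceding Theorem~\ref{thm:szegoWidomDisc} (and the symmetry relation \eqref{eq:symmetryrelation}), for every $u_0\in\Omega_\alpha$ with $|u_0|\neq 1$ the full sequence $b_{\Omega_\alpha}(u,\infty)^nP_{n,u_0}(u)$ converges, uniformly on compact subsets of $\Omega_\alpha$, to an explicit analytic function $f_{u_0}(u)$; hence $L(u_0)=f_{u_0}(u_0)=e^{i\phi}\tfrac12\bigl(1+\tfrac{h(\lambda_0,\lambda_0)}{h(\lambda_0,\lambda_0)}\bigr)\cdot 1\cdot 1 = e^{i\phi}$ evaluated appropriately — more usefully, from Theorem~\ref{thm:szegoWidomDisc} one reads off $L(u_0)=|f_{u_0}(u_0)|$, and plugging $\lambda=\lambda_0$ into \eqref{eq:formulaWidomSzegoU0} gives a closed expression in $\lambda_0=\lambda(u_0)$ alone, which is manifestly continuous in $u_0$ on $\{|u_0|<1\}$ (since $\lambda$ is continuous there and the expression has no poles on $\Pi$); the case $|u_0|>1$ follows by $L(u_0^*)=L(u_0)$. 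The remaining work is to handle the circle $|u_0|=1$, i.e. $u_0\in\partial\D\setminus A_\alpha$, where the technique breaks down and only subsequential limits are available a priori.

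\textbf{The boundary circle --- the main obstacle.} This is where the real content lies. Here I would use a two-sided comparison. On one hand, $L$ is already known to be continuous and explicitly given on $\Omega_\alpha\cap\{|u|\neq 1\}$; on the other hand, for any $u_0$ on the critical circle and any subsequential limit $f$ of $b_{\Omega_\alpha}(u,\infty)^nP_{n,u_0}(u)$, we have the pointwise bound $|f(u)|\le L(u)$ for all $u$ (since $|b^nP_{n,u_0}|\le b^nL_n$ and, along the subsequence, $b^nL_n\to L$ locally uniformly by the first paragraph combined with Montel). This forces $L(u_0)=|f(u_0)|\le L(u)$ near $u_0$; combined with lower semicontinuity of $L$ (inherited as a locally uniform limit, on the $|u|\neq 1$ part, plus the subsequential definition on the circle), and with the symmetry $L(u_0)=L(u_0^*)$ which pins the two "sides" of the circle together, one concludes that $L$ extends continuously across $|u|=1$. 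The technical heart is showing that the explicit formula \eqref{eq:formulaWidomSzegoU0}, as $|u_0|\to 1$, has a well-defined boundary limit matching the value furnished by the Chebyshev-problem maximizer; concretely, as $|u_0|\to 1$ one has $\lambda_0\to\partial\Pi$, i.e. $\arg\lambda_0\to\pm\pi/4$, so $\lambda_0^2\to\pm i|\lambda_0|^2$ and the three rational factors in \eqref{eq:formulaWidomSzegoU0} combine into a limit that is continuous in the boundary parameter --- this is the computation I expect to be the one genuinely delicate point, and I would carry it out by substituting $\lambda_0=|\lambda_0|e^{\pm i\pi/4}$ and checking the factors $\tfrac{\lambda^2+\lambda_0^2}{\lambda^2+\overline{\lambda_0}^2}$ and $\tfrac{\lambda^2-|\lambda_0|^2}{\lambda^2+|\lambda_0|^2}$ do not degenerate. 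Once the boundary values agree, uniqueness of the subsequential limit on $|u_0|=1$ follows and continuity of $L$ on all of $\Omega_\alpha$ is established.
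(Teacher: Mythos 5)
Your treatment of $L_n$ is correct and is essentially the paper's argument: the family $\cP_{n,\alpha}$ is locally bounded (indeed $|b_{\Omega_\alpha}(u,\infty)^nP(u)|\leq 1$), hence locally equicontinuous, and the two-sided comparison with near-maximizers $L_n(u_0)\geq |P_{n,u}(u_0)|>|P_{n,u}(u)|-\epsilon=L_n(u)-\epsilon$ gives continuity. For $L$, however, you abandon this mechanism and take a detour through the explicit formula \eqref{eq:formulaWidomSzegoU0}, and that is where the proposal has genuine gaps. First, your argument at the critical circle $|u_0|=1$ only establishes one inequality: from $|f(u)|\leq L(u)$ and continuity of the analytic limit $f$ you get $L(u_0)=|f(u_0)|\leq\liminf_{u\to u_0}L(u)$, i.e.\ lower semicontinuity at $u_0$; nothing in the sketch delivers $\limsup_{u\to u_0}L(u)\leq L(u_0)$, and neither the symmetry $L(u_0^*)=L(u_0)$ nor "lower semicontinuity inherited as a locally uniform limit" supplies it. Second, the "genuinely delicate computation" you propose is aimed at the wrong degeneration: under \eqref{def:lambda} the unit circle is mapped into the unit circle of the $\lambda^4$-variable, so $|u_0|\to 1$ gives $|\lambda_0|\to 1$ with $\lambda_0$ staying in the \emph{interior} of $\Pi$; the boundary rays $\arg\lambda_0=\pm\pi/4$ correspond to $A_\alpha$ itself, not to $\{|u_0|=1\}$. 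The expression $k_{\bbH_+}(\lambda_0,\lambda_0)=2|\lambda_0|^2/|\lambda_0+\overline{\lambda_0}|^2$ is in fact smooth across $|\lambda_0|=1$, so there is nothing to "de-degenerate" there --- the real issue, which the proposal leaves open, is proving that the subsequentially defined value $L(u_0)=|f(u_0)|$ on the circle actually equals the boundary value of that formula.

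The paper's route avoids all of this: since every $b_{\Omega_\alpha}(\cdot,\infty)^nP$ with $P\in\cP_{n,\alpha}$ is bounded by $1$ on $\Omega_\alpha$, the subsequential limit functions $f_{u_0}$ (for \emph{all} $u_0\in\Omega_\alpha$, with no case distinction on $|u_0|$) form a uniformly bounded, hence locally equicontinuous, family, and the identical near-maximizer estimate $L(u_0)\geq|f_u(u_0)|>|f_u(u)|-\epsilon=L(u)-\epsilon$ (together with its reverse) gives continuity of $L$ directly. I would recommend replacing your second and third paragraphs with this uniform argument; as written, the proposal does not close the continuity of $L$ on the circle $|u|=1$.
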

 \begin{proof}
% 	Fix $u_0\in\Omega_\alpha$ and set 
% 	\begin{align*}
% 	c(u_0,\delta)=\max\{|b(u,\infty)|^{-n}:\ |u-u_0|\leq\delta\}.
% 	\end{align*}
% 	Moreover, let $\delta_1=\min\{1,\operatorname{dist}(A_0,u_0)\}$. By Cauchy Integral formula, we obtain that for all $P\in\cP_{n,\alpha}$ 
% 	\begin{align*}
% 	|P'(u)|\leq \frac{c(u_0,\delta_1)}{\delta_1}\quad\text{for }|u-u_0|\leq \delta_1.
% 	\end{align*}
	Let $P\in P_{n,\alpha}$. Since $|b_{\Omega_\alpha}(u,\infty)^nP(u)|\leq 1$, $P$ is locally bounded and therefore $\cP_{n,\alpha}$ is equicontinuous. 
 	Hence, for every $u_0\in\C\setminus A_\alpha$ there exists $\delta>0$ such that $|u-u_0|<\delta$ implies
 	\begin{align*}
 	L_n(u_0)\geq |P_{n,u}(u_0)|>|P_{n,u}(u)|-\epsilon=L_n(u)-\epsilon.
 	\end{align*}
 	In the same way we see that $L_n(u)>L_n(u_0)-\epsilon$ and therefore $|L_n(u)-L_n(u_0)|< \epsilon$. 
 	The same proof applies for $L(u)$.
% 	To show that $L(u_0)$ is continuous we point out that by our definition for all $u_0\in\Omega_\alpha$  there exists an analytic function $f_{u_0}(u)$ such that 
% 	\begin{itemize}
% 		\item[(i)] $L(u_0)=|f_{u_0}(u_0)|$
% 		\item[(ii)] $|f_{u_0}|\leq 1$ in $\Omega_\alpha$
% 		\item[(iii)] $|f_{u_0}(u_1)|\leq |f_{u_0}(u_0)|$ for all $u_1\in\Omega_\alpha$.
% 	\end{itemize}
% 	These were all ingredients that were needed to prove continuity for $L_n(u)$. Therefore, the same proof also applies in this case. 
 \end{proof}

	\section{Log Subharmonicity and Reproducing Kernels}
In this chapter we will prove some properties of the extremal values $L_n(u)$ and $L(u)$ as functions on $\Omega_\alpha$. We recall the definition of log subharmonicity. 

Let  $\Omega\subset\C$ and $f:\Omega\to\R$ be an upper semicontinuous function. It is called subharmonic if for every $z_0\in\Omega$ there exists $R$ such that  $\{z:\ |z-z_0|\leq R\}\subset\Omega$ and for all $0<r\leq R$ we have
\begin{align*}
f(z_0)\leq\frac{1}{2\pi}\int_0^{2\pi}f(z_0+re^{it})\d t;
\end{align*}
cf. \cite{Duren70}. A function is called $\log$ subharmonic if $\log f$ is subharmonic.
\begin{remark}
	If $f$ is  twice continuously differentiable, then $f$ is subharmonic if and only if $\Delta f\geq 0$ in $\Omega$. 
\end{remark}
\begin{proposition}\label{prop: subharmonic}
	$L_n(u)$ and $L(u)$ are $\log$ subharmonic on $\C\setminus A_\alpha$ and $\Omega_\alpha$, respectively.
\end{proposition}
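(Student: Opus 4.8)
The plan is to express each of $L_n$ and $L$ as a pointwise supremum of moduli of uniformly bounded analytic functions. Once this is done, $\log L_n$ and $\log L$ are suprema of locally uniformly bounded families of subharmonic functions, so their upper semicontinuous regularisations are subharmonic; since $L_n$ and $L$ are continuous by the preceding lemma, these functions coincide with their regularisations and are therefore subharmonic, which is exactly log subharmonicity of $L_n$ and $L$.

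For $L_n$ this requires no new input. We have $\log L_n=\sup\{\log|P|:P\in\cP_{n,\alpha}\}$, each $\log|P|$ is subharmonic on $\C$, and $\cP_{n,\alpha}$, being the unit ball of the norm $\|\cdot\|_{A_\alpha}$ on the finite-dimensional space $\cP_n$, is compact, so the family $\{|P|:P\in\cP_{n,\alpha}\}$ is locally uniformly bounded. As $1\in\cP_{n,\alpha}$ we have $L_n\ge 1>0$, so $\log L_n$ is continuous, hence equal to its regularisation, hence subharmonic on $\C\setminus A_\alpha$.

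The case of $L$ is the substantive one; here I would first construct, for every $v\in\Omega_\alpha$, an analytic function $F_v$ on $\Omega_\alpha$ with $|F_v|\le 1$, $F_v(v)=L(v)$, and $|F_v|\le L$ on all of $\Omega_\alpha$. For each $n$ and $v$, the function $b_{\Omega_\alpha}(\cdot,\infty)^nP_{n,v}(\cdot)$ is analytic on the simply connected domain $\Omega_\alpha$ (so $b_{\Omega_\alpha}(\cdot,\infty)$ is single-valued and vanishes simply at $\infty$) and, by the maximum principle, has modulus at most $1$, because $|b_{\Omega_\alpha}(\cdot,\infty)|=1$ and $|P_{n,v}|\le 1$ on $\partial\Omega_\alpha=A_\alpha$. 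For $|v|\ne 1$ the Lemma preceding Theorem \ref{thm:szegoWidomDisc}, combined with the symmetry relation \eqref{eq:symmetryrelation} when $|v|>1$, shows that $b_{\Omega_\alpha}(\cdot,\infty)^nP_{n,v}(\cdot)$ converges uniformly on compacts of $\Omega_\alpha$ to an analytic limit $F_v$; evaluating at $v$ and using the normalisation $b_{\Omega_\alpha}(v,\infty)^nP_{n,v}(v)>0$ gives $F_v(v)=L(v)$. For $|v|=1$ one simply takes $F_v$ to be the subsequential limit that was used to define $L(v)$. Now, since $L_n(u)\ge|P_{n,v}(u)|$ for all $u$ and $n$, multiplying by $|b_{\Omega_\alpha}(u,\infty)|^n=e^{-ng_{\Omega_\alpha}(u,\infty)}$ and letting $n\to\infty$ along a subsequence realising $F_v$ yields $L(u)\ge|F_v(u)|$ whenever $|u|\ne 1$, since the left-hand side then equals $b_{\Omega_\alpha}(u,\infty)^nP_{n,u}(u)$, which converges to $L(u)$ by the same Lemma; by continuity of $L$ and of $|F_v|$, and density of $\{|u|\ne 1\}$ in $\Omega_\alpha$, this extends to all $u\in\Omega_\alpha$. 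Taking $v=u$ gives $|F_u(u)|=L(u)$, so that \[L(u)=\sup_{v\in\Omega_\alpha}|F_v(u)|,\qquad u\in\Omega_\alpha.\] Then $\log L=\sup_v\log|F_v|$ is a supremum of subharmonic functions bounded above by $0$ and is upper semicontinuous (being $\log$ of a continuous nonnegative function), hence subharmonic; thus $L$ is log subharmonic on $\Omega_\alpha$.

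The step I expect to be the main obstacle is the identity $L(u)=\sup_v|F_v(u)|$: the inequality $\ge$ is trivial (take $v=u$), but the inequality $|F_v(u)|\le L(u)$ for \emph{all} $u$ and $v$ hinges on knowing that $e^{-ng_{\Omega_\alpha}(u,\infty)}L_n(u)$ converges to $L(u)$ along the full sequence, and not just subsequentially, for $u$ off the unit circle — this is precisely the content of the explicit asymptotics established in the previous section together with the symmetry reduction to the case $|u_0|<1$, after which continuity of $L$ disposes of the circle.
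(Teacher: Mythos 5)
Your proof is correct, and while the $L_n$ half coincides with the paper's argument, the $L$ half takes a genuinely different route. For $L_n$ both you and the paper observe that $\log L_n$ is a supremum of the subharmonic functions $\log|P|$, $P\in\cP_{n,\alpha}$, and that continuity supplies the needed upper semicontinuity. For $L$, the paper does not represent the limit as an envelope of analytic functions; instead it uses the explicit formula from Theorem \ref{thm:maincCP} to write $\log\big(e^{-ng_{\Omega_\alpha}(u,\infty)}L_n(u)\big)=n(g_n(z)-g(z))+g_{\Omega_n}(z,\overline{z}_{u_0})$ and argues that this sequence of subharmonic functions is monotone increasing in $n$ (since $\Omega_n\nearrow\Omega_0$ makes $g_{\Omega_n}$ increase), so that the sub-mean-value inequality passes to the limit by monotone convergence. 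You instead produce, for each $v$, the analytic limit $F_v=\lim_n b_{\Omega_\alpha}(\cdot,\infty)^nP_{n,v}$ and establish $L=\sup_v|F_v|$, thereby reducing the $L$ case to the same envelope mechanism as the $L_n$ case. Your key inequality $|F_v(u)|\le L(u)$ is correctly justified: $e^{-ng_{\Omega_\alpha}(u,\infty)}L_n(u)\ge|b_{\Omega_\alpha}(u,\infty)^nP_{n,v}(u)|$, and for $|u|\ne1$ the left side equals $b_{\Omega_\alpha}(u,\infty)^nP_{n,u}(u)$, which converges along the full sequence by the lemma preceding the proof of Theorem \ref{thm:szegoWidomDisc} together with \eqref{eq:symmetryrelation}; continuity of $L$ disposes of $|u|=1$. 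What each approach buys: the paper's argument is shorter but leans on the specific potential-theoretic representation and a monotonicity claim stated rather tersely; yours is longer but structural, using only the existence of the normalized limits of the pointwise maximizers, and it exhibits $L$ directly as an upper envelope of moduli of bounded analytic functions, which sits well with the reproducing-kernel interpretation the paper is aiming at.
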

\begin{proof}
The modulus of an analytic function is $\log$ subharmonic. Since $L_n$ is continuous it can be easily seen that it is $\log$ subharmonic as the upper envelope of polynomials; cf. \cite[Lecture 7]{Lev96}. Clearly, this also holds for $|b_{\Omega_\alpha}(u,\infty)|^nL_n(u)$. Note that
\begin{align*}
\log|b_{\Omega_\alpha}(u,\infty)^nL_n(u)|=n(g_n(z)-g(z))+g_{\Omega_n}(z,\overline{z}_{u_0})
\end{align*}
By the maximum principle $g_{\Omega_n}(z,z_1)$ is increasing in $n$ and therefore this holds for $n(g_n(z)-g(z))+g_{\Omega_n}(z,\zub)$.  Thus, we can interchange limit and integration and obtain that $L(u)$ is $\log$ subharmonic.  
\end{proof}
\begin{proof}[Proof of Theorem \ref{thm:ReproducingKernel}]
%	The function $\lambda$ defined in the introduction is a composition of the maps $z:\Omega_\alpha\to \Omega_0$, $w(z):\Omega_0\to \C_+$ defined by
%	$
%	w(z)=\sqrt{\frac{z-1}{z+1}}
%	$
%	and $\tilde \lambda (w):\C_+\to\Pi$ defined by $\tilde \lambda(w)=\sqrt{-i w}$.
%	First, we derive an expression of $L(u_0)$ in the $w$-variable. Let $w_{0}=w(z_{u_0})$. Since FLTs map circles onto circles, we see that $w(x_{0})=i|w_{0}|$. Moreover, due to the reflection principle $w(\zub)=-\overline{w_0}$. Therefore,
%	\begin{align*}
%		L(u_0)=\left|\frac{w_{0}-i|w_{0}|}{w_{u_0}+i|w_{0}|}\frac{w_{0}+w_{0}}{w_{0}+\overline{w_{0}}}\right|.
%	\end{align*}
%	Setting $w_0=i\lambda_0^2$ we obtain
	Evaluating \eqref{eq:formulaWidomSzegoU0} at $u_0$, we obtain
	\begin{align*}
		L(u_0)&=\left|\frac{\lambda_0^2-|\lambda_0|^2}{\lambda_0^2+|\lambda_0|^2}\right|\frac{2|\lambda_0|^2}{|\lambda_0^2-\overline{\lambda_0}^2|}\\
		&=\frac{|(\lambda_0^2-|\lambda_0|^2)(\overline{\lambda_0}^2+|\lambda_0|^2)|}{|\lambda_0^2+|\lambda_0|^2|^2}\frac{2|\lambda_0|^2}{|\lambda_0^2-\overline{\lambda_0}^2|}\\
		&=\frac{|\lambda_0|^2|\lambda_0^2-\overline{\lambda_0}^2|}{|\lambda_0|^2|\lambda_0+\overline{\lambda_0}|^2}\frac{2|\lambda_0|^2}{|\lambda_0^2-\overline{\lambda_0}^2|}\\
		&=\frac{2|\lambda_0|^2}{|\lambda_0+\overline{\lambda_0}|^2}=k_{\bbH_+}(\lambda_0,\lambda_0).
	\end{align*}
\end{proof}
	
\begin{remark}
	\begin{itemize}
		\item[(i)] Let $\partial,\overline{\partial}$ denote the Wirtinger derivatives. Since $4\partial\overline{\partial}=\Delta $,  we have for twice  continuously differentiable functions that
		\begin{align*}
		f \text{ is }\log\text{subharmonic} \iff \begin{bmatrix}
		f(z)& \overline{\partial} f(z)\\
		\partial f(z)& \partial\overline{\partial}  f(z)
		\end{bmatrix}
		\geq 0.
		\end{align*}
		Note that this matrix-inequality, which appears naturally for $L$ as limit of an upper envelope of polynomials is just a small part of an matrix-inequality, which holds for reproducing kernels of analytic functions. Namely, writing 
		\begin{align*}
		k(z,z_0)=\sum_k\phi_k(z)\overline{\phi_k(z_0)},
		\end{align*}
		for an orthonormal basis $\{\phi_k\}$, we see that the matrix $\{\partial^i\overline{\partial}^jk(z_0,z_0)\}_{i,j=1}^n$ is the Gram matrix of the vectors
		$\{\phi_k(z_0)\}, \{\partial\phi_k(z_0)\},\dots, \{\partial^n\phi_k(z_0)\}$ with respect to the standard $\ell^2$ scalar product and therefore 
		$$
		\{\partial^i\overline{\partial}^jk(z_0,z_0)\}_{i,j=1}^n\geq 0\quad \text{for all } n\in\N. 
		$$ 
		\item[(ii)] The kernel
		$
			\frac{1}{(\lambda+\overline \lambda_0)^2}
		$
		is up to normalization that Bergman kernel of $\bbH_+=\{z\in\C:\ \Re z>0\}$. Hence, $k_{\bbH_+}(\lambda,\lambda_0)$ is the reproducing kernel for the weighted Bergman space $\lambda A^2(\bbH_+)$. 
		\item[(iii)] Since $\Pi$ is a subset of $\bbH_+$ we assume that 	$k_{\Omega_{\alpha}}(u,u_0)$ has an extension to some larger domain (probably a Riemann surface). This is quite understandable for the following reason. In general, a reproducing kernel $k(z_0,z_0)$ diverges if $z_0$ converges to the boundary of the domain. But due to our setting, it is clear that $L(u)\to 1$ as $u\to A_\alpha$. Hence, $A_\alpha$ might not be real boundary of the domain on which $k_{\Omega_{\alpha}}(u,u_0)$ is defined.
\end{itemize}		
\end{remark}
	 \bibliographystyle{amsplain}
\bibliography{lit}
\end{document}